\newtheorem{theorem}{Theorem}[section]
\newtheorem{lemma}[theorem]{Lemma}
\newtheorem{corollary}[theorem]{Corollary}
\newtheorem{proposition}[theorem]{Proposition}
\newtheorem{definition}[theorem]{Definition}
\newtheorem{remark}{Remark}
\title{A smoothing analysis for multigrid methods applied to tempered fractional problems
}
\author{
	D. Ahmad\\
	Università degli studi dell'Insubria \\
	Como, Italy\\
	\texttt{dahmad@uninsubria.it} \\
	\And
	M. Donatelli \\
	Università degli studi dell'Insubria \\
	Como, Italy\\
	\texttt{marco.donatelli@uninsubria.it}
		\And
	M. Mazza \\
	Università degli studi dell'Insubria \\
	Como, Italy\\
	\texttt{mariarosa.mazza@uninsubria.it}
		\And
	S. Serra-Capizzano \\
	Università degli studi dell'Insubria \\
	Como, Italy\\
	\texttt{s.serracapizzano@uninsubria.it}
	\And
	K. Trotti \\
	Universit\`a della Svizzera Italiana\\
	Lugano, Switzerland \\
	\texttt{ken.trotti@usi.ch}
}
\begin{document}
	\maketitle

\begin{abstract}
	We consider the numerical solution of time-dependent space tempered fractional diffusion equations. The use of Crank-Nicolson in time and of second-order accurate tempered weighted and shifted Gr\"unwald difference in space leads to dense (multilevel) Toeplitz-like linear systems. By exploiting the related structure, we design an ad-hoc multigrid solver and multigrid-based preconditioners, all with weighted Jacobi as smoother. A new smoothing analysis is provided, which refines state-of-the-art results expanding the set of the suitable Jacobi weights. Furthermore, we prove that if a multigrid method is effective in the non-tempered case, then the same multigrid method is effective also in the tempered one. 
	The numerical results confirm the theoretical analysis, showing that the resulting multigrid-based solvers are computationally effective for tempered fractional diffusion equations.
\end{abstract}

	\keywords{Tempered fractional derivatives \and multigrid methods \and Toeplitz matrices}

\section{Introduction}
Tempered fractional derivatives are a generalization of fractional derivatives where an exponential tempering is involved \cite{meer2015}. In case a Riemann-Liouville formulation is adopted, given $\alpha \in (n-1, n)$, $n \in \mathbb{N^{+}}$, $\lambda\geq 0$, we define the left-handed and right-handed Riemann-Liouville tempered fractional derivatives \cite{baeumer2010tempered}, respectively, as
\begin{align}\label{def1}
		\ _{a}{\mathcal{D}_{x}^{\alpha,\lambda}}u(x)&=\frac{{\rm{e}}^{-\lambda x}}{\Gamma(n-\alpha)}\frac{d^{n}}{dx^{n}}\int_{a}^{x}\frac{{\rm{e}}^{\lambda \xi}u(\xi)}{(\,x-\xi)\,^{\alpha-n+1}}d{\xi},\\
		\ _{x}{\mathcal{D}_{b}^{\alpha,\lambda}}u(x)&=\frac{(\,-1)\,^{n}{\rm{e}}^{\lambda x}}{\Gamma(n-\alpha)}\frac{d^{n}}{dx^{n}}\int_{x}^{b}\frac{{\rm{e}}^{-\lambda \xi}u(\xi)}{(\,\xi-x)\,^{\alpha-n+1}}d{\xi}, 
\end{align}
Here $\Gamma(\cdot)$ is the Euler gamma function.
It is clear that for $\lambda=0$ the exponential tempering vanishes and the tempered derivatives reduce to the classical Riemann-Liouville fractional derivatives (see \cite{podlubny1998fractional}).

Tempered Fractional Diffusion Equations (TFDEs) are standard diffusion equations with a tempered fractional derivative in place of a second order one. TFDEs have been found useful to model phenomena in physics, finance, biology and hydrology (see \cite{meer2015} and citations therein). A common approach to numerically solve time-dependent space-TFDEs is to use Crank-Nicolson (CN) in time and Tempered Weighted and Shifted Gr\"unwald Difference (TWSGD) in space, shortly CN-TWSGD method.  
High order CN-TWSGD schemes have been studied in \cite{li2016high, bu2021high}, where the stability depending on some parameters is proved.
These schemes lead to sequences of linear systems to be solved at each time step, requiring preconditioning or multigrid strategies for dealing with the large size of the involved full and ill-conditioned coefficient matrices.

Multigrid methods (MGMs) have widely been applied in the field of Fractional Diffusion Equations (FDEs) (see e.g. \cite{donatelli2020multigrid,donatelli2016spectral,moghaderi2017spectral,meerschaert2004finite,ChEkSC}). Conversely, regarding TFDEs, MGMs have not yet extensively been investigated. In the one-dimensional case, in \cite{chen2017} the authors proved the convergence of the V-cycle with a finite element discretization.
Concerning finite difference discretizations, in \cite{bu2021multigrid} relaxed Jacobi as smoother was adopted and the convergence was proven for weights in $(0,\frac{1}{2}]$. 

In this work, for the sake of notational simplicity, we deal with a second-order accurate finite difference discretization of a time-dependent space-TFDE. Nevertheless, the same analysis can also be extended to the higher order schemes studied in \cite{li2016high}. The use of CN-TWSGD leads to a sequence of linear systems with coefficient matrices having a Toeplitz structure. Similarly to the study in \cite{donatelli2016spectral,moghaderi2017spectral,donatelli2020multigrid} for non-tempered FDEs, we provide the analysis of the generating function of the sequence of Toeplitz matrices.
In particular, we prove that such a generating function is the same as in non-tempered case. This allows to extend the numerical proposals and the convergence analysis in \cite{donatelli2016spectral,moghaderi2017spectral} to the tempered case too.
Therefore, the analytic features of the considered function are used to design efficient multigrid-based solvers by extending the theoretical result in \cite{bu2021multigrid}, providing a wider range for the Jacobi relaxation parameter $\omega$. This is of particular interest in the applications because the numerical results in \cite{bu2021multigrid}  prove that the optimum values of $\omega$ is usually larger than $\frac{1}{2}$ and such value is predicted and estimated by our smoothing analysis based on the spectral information deduced by the generating function. Moreover, the Laplacian preconditioner proposed in \cite{donatelli2016spectral,moghaderi2017spectral} is still a good alternative to multigrid methods in particular for $\alpha$ close to $2$.

In the numerical results, we provide evidences that support our choice of the smoother relaxation parameter also in the two-dimensional and steady-state cases.
The two-dimensional space opens the door to possible anisotropies, when the difference in fractional orders $\alpha,\beta$ is large, and ad-hoc robust MGMs are required (see e.g. \cite{donatelli2020multigrid} in the case of FDEs). In this work we only consider the isotropic case $\alpha\approx\beta$. 

This paper is organized as follows. Section \ref{sect:TFDE} describes the CN-TWSGD method for space-TFDE. The spectral analysis of the coefficient matrix at each time-step of CN is provided in Section \ref{sect:symb}. In Section \ref{sect:mgm} we propose an alternative to the convergence analysis of multigrid methods for TFDEs given in \cite{bu2021multigrid} providing a theoretical estimation of the Jacobi relaxation parameter. Sections \ref{sect:num}-\ref{sect:2D} are devoted to some 1D and 2D numerical results, respectively. Finally, in Section \ref{sect:concl} we draws conclusions and we discuss open questions and future research lines.

\section{Tempered fractional diffusion equations and \mbox{CN-TWSGD} scheme}\label{sect:TFDE}
We are interested in the following space-\mbox{TFDE}
\begin{equation}\label{eq:TFDE}
	\begin{cases}
		\frac{\partial u(\,x,t)\,}{\partial t}=c_{l}(\,x,t)\,{_{a}{\mathbb{{D}}_{x}^{\alpha,\lambda}}u(\,x,t)\,}+c_{r}(\,x,t)\,{_{x}{\mathbb{{D}}_{b}^{\alpha,\lambda}}u(\,x,t)\,}+f(\,x,t)\,,\quad (\,x,t)\,\in(\,a,b)\,\times(\,0,T),\,\\
		u(\,a,t)\ =g_{1}(t),\quad u(\,b,t)\,=g_{2}(t),\quad t\in(\,0,T)\,,\\
		u(\,x,0)\,=S(x),\quad x\in(\,a,b)\,,
	\end{cases}
\end{equation}

where $\alpha\in(\,1,2)\,$, $f(\,x,t)\,$ is the source term, $c_{l}(\,x,t)\,,~c_{r}(\,x,t)\,$ are non-negative diffusion coefficients,
and ${_{a}{\mathbb{{D}}_{x}^{\alpha,\lambda}}}$ and ${_{x}{\mathbb{{D}}_{b}^{\alpha,\lambda}}}$ are variants of the left and right Riemann-Liouville tempered fractional derivatives given in the following definition. 
\begin{definition}[\cite{baeumer2010tempered,cartea2007fluid}]\label{def2}
	Let $_{a}{\mathcal{{D}}_{x}^{\alpha,\lambda}}u(x)$ and $_{x}{\mathcal{{D}}_{b}^{\alpha,\lambda}}u(x)$ be defined as in \eqref{def1}, then for  $1<\alpha<2$ we define
	\begin{align*}
		_{a}{\mathbb{{D}}_{x}^{\alpha,\lambda}}u(x) &=
		_{a}{\mathcal{D}_{x}^{\alpha,\lambda}}u(x)-\alpha\lambda^{\alpha-1}\partial_{x}u(\,x)\,-\lambda^{\alpha}u(\,x)\,, \\
		_{x}{\mathbb{{D}}_{b}^{\alpha,\lambda}}u(x) &=
		_{x}{\mathcal{D}_{b}^{\alpha,\lambda}}u(x)+\alpha\lambda^{\alpha-1}\partial_{x}u(\,x)\,-\lambda^{\alpha}u(\,x)\,\\
		_{a}{\mathbb{\boldsymbol{D}}_{x}^{\alpha,\lambda}}u(x)&={_{a}{\mathcal{D}_{x}^{\alpha,\lambda}}u(x)}-\lambda^{\alpha}u(x),\\
		_{x}{\mathbb{\boldsymbol{D}}_{b}^{\alpha,\lambda}}u(x)&={_{x}{\mathcal{D}_{b}^{\alpha,\lambda}}u(x)}-\lambda^{\alpha}u(x),
	\end{align*}
\end{definition}
\begin{remark}\label{remark1}
	In the above definition, $a$ and $b$ can be extended to $-\infty$ and $\infty$ respectively, by smoothly zero extending $u(x)$ to $(\,-\infty, b)\,$ or $(\,a,\infty)\,$ or even $(\,-\infty,\infty)$.
\end{remark}

\subsection{Discretization of tempered fractional derivatives}	
In \cite{meerschaert2004finite}, Meerschaert and Tadjrean proved that the implicit Euler method based on the shifted Gr\"unwald formula is consistent and unconditionally stable, while the standard Gr\"unwald discretization leads to an unstable scheme when it is used to solve time-dependent FDEs. A similar scenario occurs also in case of time-dependent TFDEs, so a tempered counterpart of the shifted Gr\"unwald formula has been introduced \cite{li2016high}. In this subsection, we recall the shifted Gr\"unwald difference operator for the Riemann-Liouville tempered fractional derivatives.


\begin{theorem}\label{Theorem1}
	\cite{li2016high} Let $u(x)\in L^{1}(\mathbb{R})$, $_{-\infty}{\mathbb{\boldsymbol{D}}_{x}^{\alpha+l,\lambda}}u(x)$, $_{x}{\mathbb{\boldsymbol{D}}_{\infty}^{\alpha+l,\lambda}}u(x)$ and its Fourier transform belong to $L^{1}(\mathbb{R})$. 
	We define the left and right tempered $\mbox{WSGD}$ operator by 
	\begin{equation}\label{eq:aWSGO}
		{_{L}\mathcal{D}_{h,p_{1},p_{2},\cdots,p_{m}}^{\alpha,\gamma_{1},\gamma_{2},\cdots,\gamma_{m}}}u(x)=\sum_{j=1}^{m}{\gamma_{j}}	{\mathcal{A}_{h,p_{j}}^{\alpha,\lambda}}u(x),
	\end{equation}
	with
	\begin{equation*}		{\mathcal{A}_{h,p_{j}}^{\alpha,\lambda}}u(x)=\frac{1}{h^{\alpha}}\left(\sum_{k=0}^{\lfloor\frac{x-a}{h}\rfloor+p_{j}}\omega_{k}^{\alpha}{{\rm{e}}^{-(k-p_{j})h\lambda}}u(\,x-(\,k-p_{j})\,h)\,-\,{\rm{e}}^{p_{j}h\lambda}(\,1-{\rm{e}}^{-h\lambda})\,^{\alpha}\,u(x)\right),
	\end{equation*}
	and
	\begin{equation}\label{eq:bWSGO}
		{_{R}\mathcal{D}_{h,p_{1},p_{2},\cdots,p_{m}}^{\alpha,\gamma_{1},\gamma_{2},\cdots,\gamma_{m}}}u(x)=\sum_{j=1}^{m}{\gamma_{j}}	{\mathcal{B}_{h,p_{j}}^{\alpha,\lambda}}u(x),
	\end{equation}
	with
	\begin{equation*}
		{\mathcal{B}_{h,p_{j}}^{\alpha,\lambda}}u(x)=\frac{1}{h^{\alpha}}\left(\sum_{k=0}^{\lfloor\frac{b-x}{h}\rfloor+p_{j}}\omega_{k}^{\alpha}{{\rm{e}}^{-(k-p_{j})h\lambda}}u(\,x+(\,k-p_{j})\,h)\,-\,{\rm{e}}^{p_{j}h\lambda}(\,1-{\rm{e}}^{-h\lambda})\,^{\alpha}\,u(x)\right),
	\end{equation*}
\end{theorem}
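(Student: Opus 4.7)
The theorem (as stated in \cite{li2016high}) ultimately asserts that the tempered WSGD operators $_{L}\mathcal{D}_{h,p_{1},\ldots,p_{m}}^{\alpha,\gamma_{1},\ldots,\gamma_{m}}$ and $_{R}\mathcal{D}_{h,p_{1},\ldots,p_{m}}^{\alpha,\gamma_{1},\ldots,\gamma_{m}}$ approximate $_{-\infty}\mathbb{\boldsymbol{D}}_{x}^{\alpha,\lambda}u(x)$ and $_{x}\mathbb{\boldsymbol{D}}_{\infty}^{\alpha,\lambda}u(x)$ to a given order of accuracy, provided the weights $\gamma_j$ satisfy suitable moment conditions. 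The natural strategy is to work entirely on the Fourier side, since the regularity hypotheses on $u$, its tempered derivatives, and their Fourier transforms are precisely those needed to apply Plancherel/inversion rigorously.

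\textbf{Step 1 (Fourier symbols).} First I would recall the generating function identity $\sum_{k\ge 0}\omega_k^\alpha z^k=(1-z)^\alpha$ for the Gr\"unwald coefficients $\omega_k^\alpha=(-1)^k\binom{\alpha}{k}$, together with the well-known Fourier identity $\mathcal{F}\!\left[{}_{-\infty}\mathcal{D}_x^{\alpha,\lambda}u\right]\!(\xi)=(\lambda+i\xi)^\alpha\hat u(\xi)$. Substituting $z=e^{-(i\xi+\lambda)h}$ in the generating function and subtracting the compensating term $e^{p_j h\lambda}(1-e^{-h\lambda})^\alpha u(x)$ inside $\mathcal{A}_{h,p_j}^{\alpha,\lambda}$ (which is exactly the discrete counterpart of $\lambda^\alpha u$), I would obtain the Fourier symbol
\[
\widehat{\mathcal{A}_{h,p_j}^{\alpha,\lambda}u}(\xi)=\frac{1}{h^\alpha}\Bigl[e^{p_j(\lambda+i\xi)h}\bigl(1-e^{-(\lambda+i\xi)h}\bigr)^\alpha-e^{p_jh\lambda}\bigl(1-e^{-h\lambda}\bigr)^\alpha\Bigr]\hat u(\xi).
\]

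\textbf{Step 2 (Taylor expansion and moment conditions).} Setting $\zeta=(\lambda+i\xi)h$, one has $h^{-\alpha}e^{p_j\zeta}(1-e^{-\zeta})^\alpha=(\lambda+i\xi)^\alpha\,W_{p_j}(\zeta)$ with $W_p(\zeta)=e^{p\zeta}\bigl(\tfrac{1-e^{-\zeta}}{\zeta}\bigr)^\alpha$ analytic near $\zeta=0$ and $W_p(0)=1$, $W_p'(0)=p-\alpha/2$. Summing against the weights,
\[
\sum_{j=1}^m \gamma_j W_{p_j}(\zeta)=\Bigl(\sum_j\gamma_j\Bigr)+\zeta\sum_j\gamma_j(p_j-\alpha/2)+O(\zeta^2).
\]
Requiring $\sum_j\gamma_j=1$ and $\sum_j\gamma_j p_j=\alpha/2$ (the WSGD conditions) yields a second-order approximation of $(\lambda+i\xi)^\alpha$; higher-order schemes follow by forcing further moments to vanish. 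The analogous expansion of the subtracted term $h^{-\alpha}e^{p_jh\lambda}(1-e^{-h\lambda})^\alpha$ reproduces $\lambda^\alpha$ to the same order, so after subtraction the Fourier symbol of the full operator matches that of $_{-\infty}\mathbb{\boldsymbol{D}}_x^{\alpha,\lambda}$ up to $O(h^2)$ (or $O(h^m)$ in the general case).

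\textbf{Step 3 (Pointwise error bound via inversion).} Finally, since $\hat u$ and $(\lambda+i\xi)^{\alpha+l}\hat u(\xi)$ both lie in $L^1(\mathbb{R})$ (this is the role of the hypothesis on $_{-\infty}\mathbb{\boldsymbol{D}}_x^{\alpha+l,\lambda}u$), the remainder symbol can be bounded by $C\,h^2|(\lambda+i\xi)|^{\alpha+2}$ on compact sets of $\xi$ and controlled at infinity using the analytic continuation of $W_p$ in the right half-plane. Applying inverse Fourier transform then gives a uniform pointwise error $\|{_{L}\mathcal{D}_{h,\ldots}^{\alpha,\ldots}}u-{_{-\infty}\mathbb{\boldsymbol{D}}_x^{\alpha,\lambda}}u\|_\infty=O(h^2)$; the right-sided case is symmetric by the change $x\mapsto -x$. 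The main obstacle is the last step: justifying the exchange of the $h$-expansion and the Fourier inversion uniformly in $\xi$, in particular controlling $W_{p_j}(\zeta)$ for large $|\xi|$, which is where the $L^1$ hypothesis on the Fourier transform of the higher-order tempered derivative is essential.
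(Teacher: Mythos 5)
The paper does not prove this statement: it is imported verbatim from \cite{li2016high} as a known result, so there is no internal proof to compare against. Your Fourier-symbol reconstruction is the standard argument used in that reference --- generating function $(1-z)^\alpha=\sum_k\omega_k^\alpha z^k$, factorization of the symbol as $(\lambda+\iota\xi)^\alpha W_{p_j}\bigl((\lambda+\iota\xi)h\bigr)$, Taylor expansion killed by the moment conditions on $\gamma_j,p_j$, and an inverse-transform error bound --- and you correctly identify both the role of the compensating term $e^{p_jh\lambda}(1-e^{-h\lambda})^\alpha u(x)$ in reproducing $\lambda^\alpha u$ and the place where the $L^1$ hypothesis on the Fourier transform of the order-$(\alpha+l)$ tempered derivative is needed, namely the uniform-in-$\xi$ bound $\bigl|\sum_j\gamma_jW_{p_j}(\zeta)-1\bigr|\leq C|\zeta|^{l}$ on the closed right half-plane.
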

where $\omega_{k}^{\alpha}=(-1)^{k}\binom{\alpha}{k}$ denote the alternating factional binomial coefficients, while the parameters $p_{j}$ and $\gamma_{j}\in \mathbb{R}$ depend on~$l$. Then, for any integer $m\geq l$
\begin{equation}\label{eq:aWSGOl}
	{_{L}\mathcal{D}_{h,p_{1},p_{2},\cdots,p_{m}}^{\alpha,\gamma_{1},\gamma_{2},\cdots,\gamma_{m}}}u(x)={_{-\infty}{\mathbb{\boldsymbol{D}}_{x}^{\alpha,\lambda}}u(x)}+O(\,h^{l})\,,
\end{equation}	
and 
\begin{equation}\label{eq:bWSGOl}	
	{_{R}\mathcal{D}_{h,p_{1},p_{2},\cdots,p_{m}}^{\alpha,\gamma_{1},\gamma_{2},\cdots,\gamma_{m}}}u(x)={_{x}{\mathbb{\boldsymbol{D}}_{\infty}^{\alpha,\lambda}}u(x)}+O(\,h^{l})\,,
\end{equation}
uniformly for $x\in \mathbb{R}$.\\
In the following, we fix our attention to the second order accurate case, i.e. $l=2$. As a consequence, $p_{j}$ and $\gamma_{j}$ should satisfy the following conditions
\begin{align*}
	\begin{cases}
		\sum_{j=1}^{m}{\gamma_{j}}=1,\\
		\sum_{j=1}^{m}{\gamma_{j}\left[p_{j}-{\frac{\alpha}{2}}\right]}=0.
	\end{cases}       
\end{align*}
Orders $l>2$ and corresponding resized values of $p_{j}$ and $\gamma_{j}$ can be found in \cite{bu2021high}. For computational purposes and ease of presentation, we are more interested in schemes where $\vert p_{j}\vert\leq 1$. In the following sections, we take $p_{1}=1$, $p_{2}=0$, and $p_{3}=-1$. Note that the change of the order of the shifting parameter does not lead to any modification in the operator. By fixing $\gamma_{3}$, the parameters $\gamma_{1}$ and $\gamma_{2}$ satisfy
\begin{align}\label{eq:g1g2}
	\begin{cases}
		{\gamma_{1}}=\frac{\alpha}{2}+\gamma_{3},\\
		\gamma_{2}=\frac{2-\alpha}{2}-2\gamma_{3}.
	\end{cases}       
\end{align}

\subsection{Second-order CN-TWSGD scheme for the TFDE}\label{subsec:CN-TWSGD}
According to the proposal in \cite{li2016high}, we consider the following equispaced space and time grids over the domain $[a,b]\times[0,T]$:
\begin{align*}
	x_{i} &= a+ih,  &i=1,\cdots,M,  \qquad h&=\frac{b-a}{M+1},\\
	t_{j} &= j\tau,  &j=1,\cdots,N,  \qquad \tau&=\frac{T}{N}.
\end{align*}
Then, by discretizing equation \eqref{eq:TFDE} with CN in time, we obtain the semi-discrete scheme


\begin{equation*}\label{eq:CN}
	\begin{split}
		\frac{ u_{i}^{j+1}-u_{i}^{j}}{\tau}\,=\,&c_{l,i}^{j+\frac{1}{2}}{(\,_{a}{\mathbb{\boldsymbol{D}}_{x}^{\alpha,\lambda}}u_{i}^{j+\frac{1}{2}}-\alpha\lambda^{\alpha-1}\partial_{x}u_{i}^{j+\frac{1}{2}})\,}+c_{r,i}^{j+\frac{1}{2}}{(\,_{x}{\mathbb{\boldsymbol{D}}_{b}^{\alpha,\lambda}}u_{i}^{j+\frac{1}{2}}+\alpha\lambda^{\alpha-1}\partial_{x}u_{i}^{j+\frac{1}{2}})\,}+f_{i}^{j+\frac{1}{2}}\\&+O(\tau^{2}+h^{2}),
	\end{split}
\end{equation*}    
where $u_{i}^{j}=u(x_{i},t_{j})$, $c_{l,i}^{j}=c_{l}(x_{i},t_{j})$, $c_{r,i}^{j}=c_{r}(x_{i},t_{j})$ and $u_{i}^{j+\frac{1}{2}}=\frac{u_{i}^{j}+u_{i}^{j+1}}{2}$. Replacing the tempered fractional derivatives with the TWSGD operators given in equations \eqref{eq:aWSGO} and \eqref{eq:bWSGO}, we obtain the CN-TWSGD scheme
\begin{equation}\label{eq:CN_WSGD}
	\begin{split}
		&{u_{i}^{j+1}}-\frac{\tau}{2}\left[c_{l,i}^{j+1}\left({_{L}\mathcal{D}_{h,1,0,-1}^{\alpha,\gamma_{1},\gamma_{2},\gamma_{3}}}u_{i}^{j+1}\right)+c_{r,i}^{j+1}\left({_{R}\mathcal{D}_{h,1,0,-1}^{\alpha,\gamma_{1},\gamma_{2},\gamma_{3}}}u_{i}^{j+1}\right)-\alpha\lambda^{\alpha-1}{\left(c_{l,i}^{j+1}-c_{r,i}^{j+1}\right)\left(\frac{u_{i+1}^{j+1}-u_{i-1}^{j+1}}{2h}\right)}\right]\\&
		=u_{i}^{j}+\frac{\tau}{2}\left[c_{l,i}^{j}\left({_{L}\mathcal{D}_{h,1,0,-1}^{\alpha,\gamma_{1},\gamma_{2},\gamma_{3}}}u_{i}^{j}\right)+c_{r,i}^{j}\left({_{R}\mathcal{D}_{h,1,0,-1}^{\alpha,\gamma_{1},\gamma_{2},\gamma_{3}}}u_{i}^{j}\right)-\alpha\lambda^{\alpha-1}{\left(c_{l,i}^{j}-c_{r,i}^{j}\right)\left(\frac{u_{i+1}^{j}-u_{i-1}^{j}}{2h}\right)}\right]\\&+\tau f_{i}^{j+\frac{1}{2}}+O(\tau^{3}+\tau h^{2}).
	\end{split}
\end{equation}

After rearranging the weights $\omega_{k}^{\alpha}$ in equation \eqref{eq:aWSGOl}-\eqref{eq:bWSGOl}, the Riemann-Liouville tempered fractional derivatives at point $x_{i}$ are approximated as
\begin{align*}
	\begin{split}
		{_{a}{\mathcal{D}_{x}^{\alpha,\lambda}}u(x_{i})}-\lambda^{\alpha}u(x_{i})&=\frac{1}{h^{\alpha}}\left[\sum_{k=0}^{i+1}g_{k,\lambda}^{(\,2,\alpha)\,}u(\,x_{i-k+1})\,-\phi(\,\lambda)\,u(x_{i})\right]+O{(\,h^{2})\,},\\
		{_{x}{\mathcal{D}_{b}^{\alpha,\lambda}}u(x_{i})}-\lambda^{\alpha}u(x_{i})&=\frac{1}{h^{\alpha}}\left[\sum_{k=0}^{M-i+1}g_{k,\lambda}^{(\,2,\alpha)\,}u(\,x_{i+k-1})\,-\phi(\,\lambda)\,u(x_{i})\right]+O{(\,h^{2})\,},
	\end{split}
\end{align*}
where 
\begin{equation}\label{eq:phi}
	\phi(\,\lambda)\,=(\,\gamma_{1}{\rm{e}}^{h\lambda}+\gamma_{2}+\gamma_{3}{\rm{e}}^{-h\lambda})\,(\,1-{\rm{e}}^{-h\lambda})\,^{\alpha}=\sum_{j=1}^{m=3}{\gamma_{j}{\rm{e}}^{p_{j}h\lambda}}{(\,1-{\rm{e}}^{-h\lambda})\,^{\alpha}} 
\end{equation}
and 
\begin{align*}
	\begin{cases}
		{g_{0,\lambda}^{(\,2,\alpha)\,}}=\gamma_{1}\omega_{0}^{\alpha}{\rm{e}}^{h\lambda},\\
		{g_{1,\lambda}^{(\,2,\alpha)\,}}=\gamma_{1}\omega_{1}^{\alpha}+\gamma_{2}\omega_{0}^{\alpha},\\
		{g_{k,\lambda}^{(\,2,\alpha)\,}}=\left(\gamma_{1}\omega_{k}^{\alpha}+\gamma_{2}\omega_{k-1}^{\alpha}+\gamma_{3}\omega_{k-2}^{\alpha}\right){\rm{e}}^{-(k-1)h\lambda},~k\geq 2.
	\end{cases}       
\end{align*}

Using the following notations, $u^{j}=[\,u_{1}^{j},u_{2}^{j},\cdots,u_{M}^{j}]\,^{T}, c_{l}^{j}=\mbox{diag}(\,c_{l,1}^{j},c_{l,2}^{j},\cdots,c_{l,M}^{j})\,$, $c_{r}^{j}=\mbox{diag}(\,c_{r,1}^{j},c_{r,2}^{j},\cdots,c_{r,M}^{j})\,$, and

\begin{equation}\label{eq:B_matrix}
	B_{M,\lambda}^{\left(2,\alpha\right)}=
	\begin{bmatrix}
		{g_{1,\lambda}^{(\,2,\alpha)\,}}-\phi(\,\lambda)\, & {g_{0,\lambda}^{(\,2,\alpha)\,}} & 0 & 0 & \cdots & 0 \\
		{g_{2,\lambda}^{(\,2,\alpha)\,}} & {g_{1,\lambda}^{(\,2,\alpha)\,}}-\phi(\,\lambda)\, & {g_{0,\lambda}^{(\,2,\alpha)\,}} & 0 & \cdots & \vdots \\
		\vdots & \ddots & \ddots & \ddots &\ddots & \vdots\\
		\vdots & \ddots & \ddots &\ddots &\ddots & \vdots\\
		{g_{M-1,\lambda}^{(\,2,\alpha)\,}} & {g_{M-2,\lambda}^{(\,2,\alpha)\,}} & \cdots & \cdots & {g_{1,\lambda}^{(\,2,\alpha)\,}}-\phi(\,\lambda)\, & {g_{0,\lambda}^{(\,2,\alpha)\,}} \\
		{g_{M,\lambda}^{(\,2,\alpha)\,}} & {g_{M-1,\lambda}^{(\,2,\alpha)\,}} &\cdots & \cdots & {g_{2,\lambda}^{(\,2,\alpha)\,}} & {g_{1,\lambda}^{(\,2,\alpha)\,}}-\phi(\,\lambda)\, 
	\end{bmatrix}_{{M\times M}},
\end{equation}
the corresponding matrix form of equation \eqref{eq:CN_WSGD}, neglecting the remainder, can be written as
\begin{equation}\label{eq:Matrix_form}
	\left(I_M-\mathcal{M}_{M}^{j+1}\right)u^{j+1}=\left(I_M+\mathcal{M}_{M}^j\right)u^{j}+\tau{F^{j+1}},
\end{equation}
where $I_M$ is the identity of size $M$, and
\begin{align*}
	\mathcal{M}_{M}^j=\frac{\tau}{2h^{\alpha}}\left(c_{l}^j{B_{M,\lambda}^{\left(2,\alpha\right)}}+c_{r}^j(B_{M,\lambda}^{\left(2,\alpha\right)})^{T}\right)-\frac{\alpha\tau\lambda^{\alpha-1}}{4h}\left(c_{l}^j-c_{r}^j\right)H_{2,M}
\end{align*}
with $H_{2,M}=\mbox{tridiag}\left\{-1,0,1\right\}$ of size ${M}$, and
\[{F^{j+1}}=\begin{bmatrix}
	f_{1}^{j+\frac{1}{2}}\\
	f_{2}^{j+\frac{1}{2}}\\
	\vdots\\
	f_{M-1}^{j+\frac{1}{2}}\\
	f_{M}^{j+\frac{1}{2}}\\
\end{bmatrix}+\frac{{u_{0}^{j}}+{u_{0}^{j+1}}}{2h^{\alpha}}
\begin{bmatrix}
	{c_{l,1}^{j+\frac{1}{2}}}{g_{2,\lambda}^{(\,2,\alpha)\,}}+{c_{r,1}^{j+\frac{1}{2}}}{g_{0,\lambda}^{(\,2,\alpha)\,}}\\
	{c_{l,2}^{j+\frac{1}{2}}}{g_{3,\lambda}^{(\,2,\alpha)\,}}\\
	\vdots\\
	{c_{l,M-1}^{j+\frac{1}{2}}}{g_{M-1,\lambda}^{(\,2,\alpha)\,}}\\
	{c_{l,M}^{j+\frac{1}{2}}}{g_{M,\lambda}^{(\,2,\alpha)\,}}\\
\end{bmatrix}+\frac{{u_{M}^{j}}+{u_{M}^{j+1}}}{2h^{\alpha}}
\begin{bmatrix}
	{c_{r,1}^{j+\frac{1}{2}}}{g_{M,\lambda}^{(\,2,\alpha)\,}}\\
	{c_{r,2}^{j+\frac{1}{2}}}{g_{M-1,\lambda}^{(\,2,\alpha)\,}}\\
	\vdots\\
	{c_{r,M-1}^{j+\frac{1}{2}}}{g_{3,\lambda}^{(\,2,\alpha)\,}}\\
	{c_{r,M}^{j+\frac{1}{2}}}{g_{2,\lambda}^{(\,2,\alpha)\,}}+{c_{l,1}^{j+\frac{1}{2}}}{g_{0,\lambda}^{(\,2,\alpha)\,}}\\
\end{bmatrix}.\]
By defining
\begin{align}\label{eq:Coe_Matrix_form}
	{\mathcal{A}_{M}^{j+1}}=I_{M}-\mathcal{M}_{M}^{j+1},
\end{align}
\begin{align*}
	f_{M}^{j+1}=\left(I_{M}+\mathcal{M}_{M}^{j+1}\right)u^{j}+\tau{F^{j+1}}.
\end{align*}
the linear system \eqref{eq:Matrix_form}, which has to be solved at each time step $j+1$, can be written as 
\begin{align*}
	{\mathcal{A}_{{M}}^{j+1}}{u^{j+1}}=f_{{M}}^{j+1}.
\end{align*}
Using the following Lemma \ref{Lemma1}, it has been proven in \cite{varga2000matrix} that the coefficient matrix is ${\mathcal{A}_{{M}}^{j+1}}$ is strictly diagonally dominant and hence invertible.
\begin{lemma}\label{Lemma1}
	\cite{li2016high} For $1<\alpha<2 ~\mbox{and}~ \lambda\geq0$, it holds
	\begin{equation*}
		\omega_{0}^{\alpha}=1,~\omega_{1}^{\alpha}=-\alpha<0, \qquad {\omega_{0}^{\alpha}>\omega_{2}^{\alpha}>\omega_{3}^{\alpha}>\cdots>0},\qquad \sum_{k=0}^{\infty}{\omega_{k}^{\alpha}}=0.
	\end{equation*}
	Moreover, for $h> 0$ and $$\max\left({\frac{(\,2-\alpha)\,(\,\alpha^{2}+\alpha-8)\,}{2(\,\alpha^{2}+3\alpha+2)\,},\frac{(\,1-\alpha)\,(\,\alpha^{2}+2\alpha)\,}{2(\,\alpha^{2}+3\alpha+4)\,}}\right)\leq\gamma_{3}\leq \frac{(\,2-\alpha)\,(\,\alpha^{2}+2\alpha-3)\,}{2(\,\alpha^{2}+3\alpha+2)\,},$$ then it holds
	\begin{equation*}
		\begin{cases}
			g_{1,\lambda}^{(\,2,\alpha)\,}\leq0,~ \left(g_{2,\lambda}^{(\,2,\alpha)\,}+g_{0,\lambda}^{(\,2,\alpha)\,}\right)\geq0,\\
			g_{k,\lambda}^{(\,2,\alpha)\,}\geq0,\quad\mbox{for} ~k\geq3, 
		\end{cases}
	\end{equation*} 
	and $$ \sum_{k=0}^{\infty}{g_{k,\lambda}^{(\,2,\alpha)\,}}=\phi(\lambda).$$
\end{lemma}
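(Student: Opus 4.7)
The plan is to split the lemma into three independent blocks: (a) the classical properties of the binomial weights $\omega_k^\alpha$; (b) the summation identity $\sum_{k\ge 0}g_{k,\lambda}^{(2,\alpha)}=\phi(\lambda)$; (c) the sign conditions on the $g_{k,\lambda}^{(2,\alpha)}$ under the stated bounds on $\gamma_3$.

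For block (a) I would use the elementary recursion $\omega_k^\alpha=\frac{k-1-\alpha}{k}\omega_{k-1}^\alpha$, immediate from $\omega_k^\alpha=(-1)^k\binom{\alpha}{k}$, together with the generating-function identity $(1-z)^\alpha=\sum_{k\ge 0}\omega_k^\alpha z^k$. The values $\omega_0^\alpha=1$, $\omega_1^\alpha=-\alpha$, $\omega_2^\alpha=\alpha(\alpha-1)/2\in(0,1)$ are immediate; for $\alpha\in(1,2)$ and $k\ge 3$ the ratio $(k-1-\alpha)/k$ lies in $(0,1)$, so by induction the sequence $\{\omega_k^\alpha\}_{k\ge 2}$ is strictly positive and strictly decreasing, and combined with $\omega_2^\alpha<1=\omega_0^\alpha$ this yields the full ordering. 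The identity $\sum_{k\ge 0}\omega_k^\alpha=0$ then follows by Abel's theorem applied to $(1-z)^\alpha$ as $z\uparrow 1$, the convergence at $z=1$ being ensured by $\alpha>0$.

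For block (b), the key simplification is that, with the convention $\omega_{-1}^\alpha=\omega_{-2}^\alpha=0$, the three branches of the definition of $g_{k,\lambda}^{(2,\alpha)}$ collapse into the single formula
\[
g_{k,\lambda}^{(2,\alpha)}=\bigl(\gamma_1\omega_k^\alpha+\gamma_2\omega_{k-1}^\alpha+\gamma_3\omega_{k-2}^\alpha\bigr)\mathrm{e}^{-(k-1)h\lambda},\qquad k\ge 0,
\]
as one checks directly for $k=0,1$. Introducing $w=\mathrm{e}^{-h\lambda}z$ and interchanging the summations gives $\sum_{k\ge 0}g_{k,\lambda}^{(2,\alpha)}z^k=\mathrm{e}^{h\lambda}(1-w)^\alpha(\gamma_1+\gamma_2 w+\gamma_3 w^2)$, and evaluating at $z=1$ (so that $w=\mathrm{e}^{-h\lambda}$) immediately reproduces $\phi(\lambda)$ in the form of (\ref{eq:phi}); convergence at $z=1$ is inherited from block (a).

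Block (c) is the main obstacle. Since $\mathrm{e}^{-(k-1)h\lambda}>0$, the sign of $g_{k,\lambda}^{(2,\alpha)}$ for $k\ge 2$ coincides with that of the non-tempered combination $\hat g_k:=\gamma_1\omega_k^\alpha+\gamma_2\omega_{k-1}^\alpha+\gamma_3\omega_{k-2}^\alpha$; for the mixed condition $g_{0,\lambda}^{(2,\alpha)}+g_{2,\lambda}^{(2,\alpha)}\ge 0$ a monotonicity check in $h\lambda$ (using $\gamma_1\ge 0$, which is forced by the stated lower bound on $\gamma_3$) shows that the tightest case is $\lambda=0$, so it again reduces to $\hat g_0+\hat g_2\ge 0$. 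Substituting (\ref{eq:g1g2}) and the explicit values of $\omega_0^\alpha,\omega_1^\alpha,\omega_2^\alpha,\omega_3^\alpha$, the low-index sign conditions each become linear in $\gamma_3$ and their boundary values produce exactly the three rational expressions appearing in the statement: their intersection is the admissible interval, and the inequality $g_{1,\lambda}^{(2,\alpha)}\le 0$ is then verified to be implied. To propagate $\hat g_k\ge 0$ to all remaining $k$, I would induct using the recursion $\omega_k^\alpha=\frac{k-1-\alpha}{k}\omega_{k-1}^\alpha$ together with the asymptotics $\omega_k^\alpha\sim C k^{-\alpha-1}$, which force $\hat g_k>0$ for $k$ large enough; only a finite residual range of $k$ then has to be checked by hand. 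This bookkeeping, of matching each boundary value with the correct rational bound on $\gamma_3$ and closing the induction uniformly, is the step I expect to demand the most care.
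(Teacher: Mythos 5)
The paper gives no proof of this lemma: it is imported verbatim from Li and Deng \cite{li2016high}, so there is no in-paper argument to compare yours against. Judged on its own terms, your reconstruction is sound in structure and correct where it is explicit. Blocks (a) and (b) are complete: the recursion $\omega_k^\alpha=\frac{k-1-\alpha}{k}\omega_{k-1}^\alpha$, the Abel-summation of $(1-z)^\alpha$ at $z=1$, and the unified formula for $g_{k,\lambda}^{(2,\alpha)}$ with $\omega_{-1}^\alpha=\omega_{-2}^\alpha=0$ all check out, and the substitution $w=\mathrm{e}^{-h\lambda}z$ does reproduce $\phi(\lambda)$ exactly in the form of \eqref{eq:phi}. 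In block (c) your low-index computations are the right ones: writing $\hat g_k=\gamma_1\omega_k^\alpha+\gamma_2\omega_{k-1}^\alpha+\gamma_3\omega_{k-2}^\alpha$ and substituting \eqref{eq:g1g2}, the conditions $\hat g_3\ge 0$, $\hat g_0+\hat g_2\ge 0$ and $\hat g_4\ge 0$ are linear in $\gamma_3$ and yield, respectively, the stated upper bound and the two terms in the $\max$ of the lower bound; $g_{1,\lambda}^{(2,\alpha)}\le 0$ reduces to $\gamma_3\ge\frac{1-\alpha}{2}$, which is indeed implied; and your reduction of the mixed condition to $\lambda=0$ via monotonicity in $t=\mathrm{e}^{h\lambda}$ is valid because $\gamma_1=\frac{\alpha}{2}+\gamma_3\ge 0$ under the stated constraints.

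The one genuine soft spot is the closure of the induction for $k\ge 5$. As written, ``$\omega_k^\alpha\sim Ck^{-\alpha-1}$ forces $\hat g_k>0$ for $k$ large enough, and a finite residual range is checked by hand'' is not a proof: the threshold depends on $\alpha$ and $\gamma_3$, so the residual range is not uniformly finite over the admissible parameter set. The clean fix is a monotonicity argument rather than asymptotics: for $k\ge 4$ divide by $\omega_{k-2}^\alpha>0$ to get $\hat g_k/\omega_{k-2}^\alpha=\gamma_1\rho_k+\gamma_2\sigma_k+\gamma_3$ with $\sigma_k=\omega_{k-1}^\alpha/\omega_{k-2}^\alpha=1-\frac{1+\alpha}{k-1}$ and $\rho_k=\sigma_k\sigma_{k+1}$ both positive and increasing in $k$; since the stated bounds on $\gamma_3$ force $\gamma_1\ge 0$ and also $\gamma_2=\frac{2-\alpha}{2}-2\gamma_3\ge 0$ (the upper bound on $\gamma_3$ is below $\frac{2-\alpha}{4}$ for $\alpha<2$), the quantity $\hat g_k/\omega_{k-2}^\alpha$ is nondecreasing in $k$ and is therefore minimized at $k=4$, which is exactly the case you have already verified. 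With that replacement the argument is complete and uniform in $\alpha$ and $\gamma_3$.
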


\section{ Spectral analysis of the coefficient matrix}\label{sect:symb}
This section is devoted to the study of the spectral properties of the coefficient matrix-sequence $\{\mathcal{A}_{{M}}^{j+1}\}_{M\in\mathbb{N}}$. In case of constant diffusion coefficients, the coefficient matrix-sequence is a well-known Toeplitz sequence. We then determine its generating function and study its spectral distribution using spectral tools for Toeplitz sequences. In particular, we prove that the spectral symbol coincides with the generating function. To this aim, let us first introduce some basic definitions and results related to the generating function of a Toeplitz sequence.
\begin{definition}\label{def3}
	\cite{chan1991toeplitz} Let  $T_{M}\in\mathbb{C}^{M\times M}$ be the Toeplitz matrix of the form
	\begin{equation}\label{eq:Toeplitz_M}
		T_{M}=\begin{bmatrix}
			& b_{0} & b_{-1} & b_{-2} & \cdots & \cdots & b_{1-M} &\\
			& b_{1} & b_{0} & b_{-1} & \cdots & \cdots & b_{2-M} &\\
			& \vdots & \ddots & \ddots & \ddots &\ddots & \vdots&\\
			& \vdots & \ddots & \ddots &\ddots &\ddots & \vdots&\\
			& b_{M-2} & \ddots & \ddots & \ddots & \ddots & b_{-1} &\\
			& b_{M-1} & b_{M-2} &\cdots & \cdots & b_{1} & b_{0} &
		\end{bmatrix}
	\end{equation}	
	with \begin{equation}\label{eq:fourier_coef}
		b_{k}=\frac{1}{2\pi}\int_{-\pi}^{\pi}{f(x){\rm{e}}^{-\iota kx}dx},\quad\iota^{2}=-1,~~k\in\mathbb{Z},
	\end{equation}
	the Fourier coefficients of a function $f\in L^1(-\pi,\pi)$.
	Then the Toeplitz sequence $\left\{{T_{M}}\right\}_{M\in \mathbb{N}}$ 
	is called the sequence of Toeplitz matrices generated by $f$ and the matrix $T_{M}$ in \eqref{eq:Toeplitz_M} is denoted by $T_{M}\left(f\right)$. The function $f$ is called the generating function, both of whole sequence of matrices and of the single matrix $T_{M}\left(f\right)$.
\end{definition}

Note that given a Toeplitz matrix $T_{M}$ as in \eqref{eq:Toeplitz_M}, in order to have a generating function associated to the Toeplitz sequence, we need that there exists $f\in L^1(-\pi,\pi)$ for which the relationship (\ref{eq:fourier_coef}) holds for every $k\in\mathbb{Z}$. In the case where the partial Fourier sum
\[
\sum_{k=-M+1}^{M-1}{b_{k}{\rm{e}}^{\iota kx}}
\]
converges to $f$ when $M\to \infty$ in infinity norm, then $f$ is a continuous $2\pi$ periodic function given the Banach structure of this space. A sufficient condition is that $\sum_{k=-\infty}^{\infty}\vert{b_{k}}\vert<\infty$, i.e., the generating function belongs to the Wiener class, which is a closed sub-algebra of the continuous $2\pi$ periodic functions.


Now, according to \eqref{eq:B_matrix}, we define
\begin{equation*}
	b^{\alpha}_{M,\lambda}\left(x\right)=\sum_{k=0}^{M}{g_{k,\lambda}^{(2,\alpha)}{\rm{e}}^{\iota (k-1)x}-\phi(\lambda)}, 
\end{equation*}
with $\phi$ as in \eqref{eq:phi}, and prove the following result.

\begin{proposition}\label{Proposition1}Let $1<\alpha< 2$. The generating function associated to the matrix-sequence $\left\{B_{M,\lambda}^{\left(2,\alpha\right)}\right\}_{M\in\mathbb{N}}$, where $B_{M,\lambda}^{\left(2,\alpha\right)}$ is defined in \eqref{eq:B_matrix}, belongs to the Wiener class.
\end{proposition}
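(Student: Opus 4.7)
The plan is to identify the Fourier coefficients dictated by the Toeplitz structure of $B_{M,\lambda}^{(2,\alpha)}$, write down the candidate generating function as the pointwise limit of the trigonometric polynomial $b_{M,\lambda}^{\alpha}$ already introduced before the statement, and then verify Wiener-class membership by bounding the sum of the absolute values of those coefficients via Lemma \ref{Lemma1}.

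First, matching \eqref{eq:B_matrix} against the Toeplitz convention $T_M(f)_{i,j}=b_{i-j}$ of Definition \ref{def3}, I would read off the coefficients
$b_{-1}=g_{0,\lambda}^{(2,\alpha)}$, $b_{0}=g_{1,\lambda}^{(2,\alpha)}-\phi(\lambda)$, $b_{k}=g_{k+1,\lambda}^{(2,\alpha)}$ for $k\geq 1$, and $b_{k}=0$ for $k\leq -2$. Equivalently, the only candidate generating function is
\[
f_{\lambda}^{\alpha}(x)=\sum_{k=0}^{\infty}g_{k,\lambda}^{(2,\alpha)}\,{\rm e}^{\iota(k-1)x}-\phi(\lambda),
\]
which is precisely the $M\to\infty$ limit of $b_{M,\lambda}^{\alpha}(x)$. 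Identifying the coefficients this way is the step where I would take most care, since one must align the shifted indexing of the entries in \eqref{eq:B_matrix} with the Toeplitz convention and correctly track the diagonal shift by $\phi(\lambda)$.

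Second, to place $f_{\lambda}^{\alpha}$ in the Wiener class I need $\sum_{k\in\mathbb{Z}}|b_{k}|<\infty$, which reduces to $\sum_{k\geq 0}|g_{k,\lambda}^{(2,\alpha)}|<\infty$ (the $-\phi(\lambda)$ shift affects only the finite term $b_{0}$). By Lemma \ref{Lemma1}, under the prescribed range for $\gamma_{3}$, one has $g_{1,\lambda}^{(2,\alpha)}\leq 0$, $g_{0,\lambda}^{(2,\alpha)}+g_{2,\lambda}^{(2,\alpha)}\geq 0$, and $g_{k,\lambda}^{(2,\alpha)}\geq 0$ for every $k\geq 3$, while $\sum_{k=0}^{\infty}g_{k,\lambda}^{(2,\alpha)}=\phi(\lambda)$. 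Splitting into the finite head $k\in\{0,1,2\}$ and the non-negative tail $k\geq 3$, I would bound
\[
\sum_{k=0}^{\infty}|g_{k,\lambda}^{(2,\alpha)}|\;\leq\;|g_{0,\lambda}^{(2,\alpha)}|+|g_{1,\lambda}^{(2,\alpha)}|+|g_{2,\lambda}^{(2,\alpha)}|+\bigl(\phi(\lambda)-g_{0,\lambda}^{(2,\alpha)}-g_{1,\lambda}^{(2,\alpha)}-g_{2,\lambda}^{(2,\alpha)}\bigr),
\]
which is manifestly finite, giving absolute summability.

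The main obstacle is not analytical but bookkeeping: correctly reading the Toeplitz symbol from \eqref{eq:B_matrix} and handling the $\phi(\lambda)$ diagonal shift. Once the Fourier coefficients are correctly matched, Lemma \ref{Lemma1} essentially closes the argument. As an independent sanity check on the tail decay, one can invoke the classical asymptotic $\omega_{k}^{\alpha}=\mathcal{O}(k^{-\alpha-1})$ combined with the non-expansive factor ${\rm e}^{-(k-1)h\lambda}$ for $\lambda\geq 0$; this already yields absolute summability of the $g_{k,\lambda}^{(2,\alpha)}$ without appealing to the sign analysis of Lemma \ref{Lemma1}, and supplies a quantitative decay estimate that may be useful later in the spectral symbol analysis.
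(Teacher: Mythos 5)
Your argument is correct and follows essentially the same route as the paper: read off the Fourier coefficients from the Toeplitz structure of \eqref{eq:B_matrix}, then combine the sign pattern and the identity $\sum_{k\ge 0} g_{k,\lambda}^{(2,\alpha)}=\phi(\lambda)$ from Lemma \ref{Lemma1} to evaluate the nonnegative tail and conclude absolute summability of the coefficients. The only (harmless) differences are that your head/tail split avoids needing the sign of $g_{2,\lambda}^{(2,\alpha)}$ individually (the paper's proof asserts it is negative), and your closing observation that the $\mathcal{O}(k^{-\alpha-1})$ decay of $\omega_{k}^{\alpha}$ already yields summability is a valid independent check not used in the paper.
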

\begin{proof}
	Let us observe that $B_{M,\lambda}^{\left(2,\alpha\right)}=\left[b_{i-j+1}\right]_{i,j=1}^{M}$ with $b_{1}={g_{1,\lambda}^{(\,2,\alpha)\,}}-\phi(\,\lambda)\,$, $b_{k}={g_{k,\lambda}^{(\,2,\alpha)\,}}$ for $k\neq1$ and $b_{k}=0$ for $k<0$. 
	To prove that
	\begin{equation*}
		b^{\alpha}_{\lambda}\left(x\right)=\sum_{k=-1}^{\infty}{b_{k+1}{\rm{e}}^{\iota kx}}
	\end{equation*}
	lies in Wiener class for $\alpha\in \left(1,2\right)$, we have to prove that $\sum_{k=-1}^{\infty}\vert{b_{k+1}}\vert<\infty$. From Lemma \ref{Lemma1}, we know that $b_{1}$ and $b_{2}$ are negative and $b_{k}\geq0$ for $k\geq3$. Then
	\begin{equation*}
		\sum_{k=-1}^{\infty}\vert{b_{k+1}}\vert=\sum_{\substack{k=-1 \\ k\neq 0,1}}^{\infty}{b_{k+1}}+\vert{b_{1}}\vert+\vert{b_{2}}\vert.
	\end{equation*}
	By Lemma \ref{Lemma1}, we have
	\begin{equation*}
		\sum_{k=0}^{\infty}{b_{k}}=0\iff \sum_{\substack{k=-1 \\ k\neq 0,1}}^{\infty}{b_{k+1}}=-\left({b_{1}}+{b_{2}}\right)=\vert{b_{1}}+{b_{2}}\vert,
	\end{equation*}
	and hence
	\begin{align*}
		\sum_{k=-1}^{\infty}\vert{b_{k+1}}\vert&=\vert{b_{1}}+{b_{2}}\vert+\vert{b_{1}}\vert+\vert{b_{2}}\vert,\nonumber\\
		&\leq2\left(\vert{b_{1}}\vert+\vert{b_{2}}\vert\right),
	\end{align*}
	which implies that $b^{\alpha}_{\lambda}$ belongs to the Wiener class.
\end{proof}

After defining 
\begin{equation}\label{eq:BM}
	{\mathcal{\bf B}_{M,\lambda}^{\left(2,\alpha\right)}}=\frac{{B_{M,\lambda}^{\left(2,\alpha\right)}}+(B_{M,\lambda}^{\left(2,\alpha\right)})^{T}}{2},
\end{equation} 
we introduce
\begin{equation*}
	f_{M}\left(\alpha,\lambda;x\right)=\frac{	b^{\alpha}_{M,\lambda}\left(x\right)+\overline{	b^{\alpha}_{M,\lambda}\left(x\right)}}{2}=
	\sum_{k=0}^{M}{g_{k,\lambda}^{(2,\alpha)}\cos\left((k-1)x\right)-\phi(\lambda)}.
\end{equation*}
We can now compute the generating function of the Toeplitz sequence $\left\{\mathcal{\bf B}_{M,\lambda}^{\left(2,\alpha\right)}\right\}_{M\in\mathbb{N}}$ proving that is independent of $\lambda$.

\begin{proposition}\label{Proposition2}
	The matrix-sequence $\left\{{\mathcal{\bf B}_{M,\lambda}^{\left(2,\alpha\right)}}\right\}_{M\in\mathbb{N}}$ is generated by the function
	\begin{equation}\label{eq:symf}
		f\left(\alpha;x\right)=\Big(2\sin{\frac{x}{2}}\Big)^{\alpha}\Big(z_{\alpha}(x)+2\gamma_{3}\cos{(\frac{\alpha}{2}(x-\pi))}(\cos{x}-1)\Big),
	\end{equation}
	where
	\[
	z_{\alpha}(x)=\frac{\alpha}{2}\cos{(\frac{\alpha}{2}(x-\pi)-x)}+\frac{2-\alpha}{2}\cos{(\frac{\alpha}{2}(x-\pi))}.
	\]
\end{proposition}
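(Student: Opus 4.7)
The plan is to sum the Fourier series defining the generating function in closed form, pass to the limit $h \to 0$ (which happens as $M \to \infty$, since $h = (b-a)/(M+1)$) to shed the $\lambda$-dependence, and finally rewrite the real part in trigonometric form.

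First, I would start from the symbol $b^{\alpha}_{\lambda}(x) = \sum_{k=0}^{\infty} g_{k,\lambda}^{(2,\alpha)} e^{\iota(k-1)x} - \phi(\lambda)$ associated to $B_{M,\lambda}^{(2,\alpha)}$ as discussed before Proposition \ref{Proposition1}. The key observation is that the tempering factor $e^{-(k-1)h\lambda}$ present in $g_{k,\lambda}^{(2,\alpha)}$ for $k\geq 2$ can be absorbed into the shifted variable $\zeta := e^{\iota x - h\lambda}$, yielding $g_{k,\lambda}^{(2,\alpha)} e^{\iota(k-1)x} = (\gamma_1 \omega_k^{\alpha} + \gamma_2 \omega_{k-1}^{\alpha} + \gamma_3 \omega_{k-2}^{\alpha})\zeta^{k-1}$. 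Summing over $k \geq 2$ via the generating identity $\sum_{k \geq 0}\omega_k^{\alpha} \zeta^k = (1-\zeta)^{\alpha}$ (valid on the closed unit disk since $\alpha \in (1,2)$) and adding the $k=0,1$ boundary terms, I expect the compact expression
\[
b^{\alpha}_{\lambda}(x) = (\gamma_1 \zeta^{-1} + \gamma_2 + \gamma_3 \zeta)(1-\zeta)^{\alpha} - \phi(\lambda),
\]
where the subtraction of $\phi(\lambda)$ is exactly what enforces $b^{\alpha}_{\lambda}(0) = 0$, in agreement with the row-sum-zero property of Lemma \ref{Lemma1}.

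Next, I would let $M \to \infty$. Since $\phi(\lambda) = O(h^{\alpha})$ and $|\zeta - e^{\iota x}| = O(h)$, and the coefficients $g_k^0 := \gamma_1 \omega_k^{\alpha} + \gamma_2 \omega_{k-1}^{\alpha} + \gamma_3 \omega_{k-2}^{\alpha}$ satisfy $|g_k^0| = O(k^{-\alpha-1})$, the difference between $b^{\alpha}_{\lambda}$ and its $\lambda = 0$ analogue can be bounded in Wiener norm by $h\lambda \sum_k k |g_k^0| + O(h^{\alpha}) = O(h)$, using $\alpha > 1$. Hence $b^{\alpha}_{\lambda} \to b^{\alpha}_0(x) = (\gamma_1 e^{-\iota x} + \gamma_2 + \gamma_3 e^{\iota x})(1-e^{\iota x})^{\alpha}$ in Wiener norm, and this is the symbol governing the matrix sequence in the asymptotic sense used throughout the paper. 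Taking real parts yields the symmetrized symbol. Writing $1 - e^{\iota x} = 2\sin(x/2)\, e^{\iota(x-\pi)/2}$ on $(0,2\pi)$ gives $(1-e^{\iota x})^{\alpha} = (2\sin(x/2))^{\alpha} e^{\iota \theta}$ with $\theta := \frac{\alpha}{2}(x-\pi)$. Collecting the real part of $b^{\alpha}_0(x)$ by $\gamma_j$ produces
\[
(2\sin(x/2))^{\alpha} \bigl[\gamma_1 \cos(\theta - x) + \gamma_2 \cos \theta + \gamma_3 \cos(\theta + x)\bigr].
\]
Using $\cos(\theta + x) + \cos(\theta - x) = 2\cos \theta \cos x$ to eliminate $\cos(\theta + x)$, and substituting $\gamma_1 - \gamma_3 = \alpha/2$ and $\gamma_2 + 2\gamma_3 = (2-\alpha)/2$ from \eqref{eq:g1g2}, the bracket collapses to $z_{\alpha}(x) + 2\gamma_3 \cos \theta(\cos x - 1)$, reproducing \eqref{eq:symf}.

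The main obstacle is conceptual rather than algebraic: the finite-$M$ matrices are not genuine Toeplitz matrices with a single fixed symbol, because their entries depend on $M$ via $h$. The statement must therefore be read in the asymptotic Wiener-norm sense justified above, which is precisely what is needed for the spectral distribution arguments invoked later in the paper.
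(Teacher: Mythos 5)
Your proof is correct and follows the same essential route as the paper's: expand the entries via the binomial series $\sum_k \omega_k^{\alpha}z^k=(1-z)^{\alpha}$, observe that the $\lambda$-dependence disappears in the limit, and convert to trigonometric form via $1-{\rm e}^{\iota x}=2\sin\frac{x}{2}\,{\rm e}^{\iota(x-\pi)/2}$ together with the relations \eqref{eq:g1g2}. The two genuine differences are in presentation and rigor rather than strategy. First, you sum the full complex symbol $b^{\alpha}_{\lambda}$ in closed form and take real parts only at the end, whereas the paper symmetrizes first and carries two conjugate series through the computation; this is cosmetic. Second, and more substantively, the paper disposes of the tempering by formally replacing ${\rm e}^{\iota x-h\lambda}$ with ${\rm e}^{\iota x}$ inside the limit $M\to\infty$ and noting that $\phi(\lambda)\to 0$, i.e.\ it silently merges the truncation limit with the coefficient-perturbation limit $h\to 0$; your explicit $O(h)$ Wiener-norm bound, using $\lvert g_k^0\rvert=O(k^{-\alpha-1})$ and $\alpha>1$ so that $\sum_k k\lvert g_k^0\rvert<\infty$, makes that interchange rigorous and, as you note, is exactly the kind of uniform control one needs to legitimately speak of a single generating function for a matrix-sequence whose entries depend on $M$ through $h$. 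The paper does not address this last conceptual point at all, so your closing remark is a fair and useful clarification rather than a deviation.
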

\begin{proof}
	According to \eqref{eq:BM}, the generating function of the matrix-sequence $\left\{{\mathcal{\bf B}_{M,\lambda}^{\left(2,\alpha\right)}}\right\}_{M\in\mathbb{N}}$ is
	\[
	f\left(\alpha,\lambda;x\right) = \frac{b^{\alpha}_{\lambda}\left(x\right) + \overline{b^{\alpha}_{\lambda}\left(x\right)}}{2}\,.
	\]
	Therefore, thanks to Proposition \ref{Proposition1}, $f\left(\alpha,\lambda;x\right)$
	belongs to the Wiener class and hence 
	\begin{equation*}
		\begin{split}
			f\left(\alpha,\lambda;x\right)&=\lim_{M\to\infty}f_M\left(\alpha,\lambda;x\right)\\
			&=\lim_{M\to\infty}\sum_{k=0}^{M}{g_{k,\lambda}^{(2,\alpha)}\Bigg[\frac{{\rm{e}}^{\iota (k-1)x}+{\rm{e}}^{-\iota (k-1)x}}{2}\Bigg]}-\phi(\lambda)\\
			&=\frac{1}{2}\lim_{M\to\infty}\Bigg[g_{0,\lambda}^{(2,\alpha)}({\rm{e}}^{\iota x}+{\rm{e}}^{-\iota x})+g_{1,\lambda}^{(2,\alpha)}+\sum_{k=2}^{M}{g_{k,\lambda}^{(2,\alpha)}\big[{{\rm{e}}^{\iota (k-1)x}+{\rm{e}}^{-\iota (k-1)x}}\big]}\Bigg]-\phi(\lambda)\\
			&=\frac{1}{2}\lim_{M\to\infty}\Bigg[\gamma_{1}\omega_{0}^{\alpha}{\rm{e}}^{h\lambda-\iota x}+\gamma_{1}\omega_{1}^{\alpha}+\gamma_{2}\omega_{0}^{\alpha}+\sum_{k=2}^{M}{\left(\gamma_{1}\omega_{k}^{\alpha}+\gamma_{2}\omega_{k-1}^{\alpha}+\gamma_{3}\omega_{k-2}^{\alpha}\right){\rm{e}}^{-(k-1)h\lambda}}{{{\rm{e}}^{\iota (k-1)x}}}+\\
			&\quad+\gamma_{1}\omega_{0}^{\alpha}{\rm{e}}^{h\lambda+\iota x}+\gamma_{1}\omega_{1}^{\alpha}+\gamma_{2}\omega_{0}^{\alpha}+\sum_{k=2}^{M}{\left(\gamma_{1}\omega_{k}^{\alpha}+\gamma_{2}\omega_{k-1}^{\alpha}+\gamma_{3}\omega_{k-2}^{\alpha}\right){\rm{e}}^{-(k-1)h\lambda}}{{{\rm{e}}^{-\iota (k-1)x}}}\Bigg]-\phi(\lambda)\\
			&=\frac{1}{2}\lim_{M\to\infty}\Bigg[\gamma_{1}{\rm{e}}^{h\lambda-\iota x}\sum_{k=0}^{M}{\omega_{k}^{\alpha}{\rm{e}}^{k(\iota x-h\lambda)}}+\gamma_{2}\sum_{k=0}^{M}{\omega_{k}^{\alpha}{\rm{e}}^{k(\iota x-h\lambda)}}+\gamma_{3}{\rm{e}}^{-h\lambda+\iota x}\sum_{k=0}^{M}{\omega_{k}^{\alpha}{\rm{e}}^{k(\iota x-h\lambda)}}+\\
			&\quad+\gamma_{1}{\rm{e}}^{h\lambda+\iota x}\sum_{k=0}^{M}{\omega_{k}^{\alpha}{\rm{e}}^{k(-\iota x-h\lambda)}}+\gamma_{2}\sum_{k=0}^{M}{\omega_{k}^{\alpha}{\rm{e}}^{k(-\iota x-h\lambda)}}+\gamma_{3}{\rm{e}}^{-h\lambda-\iota x}\sum_{k=0}^{M}{\omega_{k}^{\alpha}{\rm{e}}^{k(-\iota x-h\lambda)}}\Bigg]-\phi(\lambda).
		\end{split}
	\end{equation*}
	Now, recalling that $\omega_{k}^{\alpha}=(-1)^{k}\binom{\alpha}{k}$ and by using the well known binomial series,
	\begin{align*}
		(1+z)^{\alpha}=\sum_{k=0}^{\infty}\binom{\alpha}{k}z^{k},\quad z\in\mathbb{C},\quad\alpha>0,~\vert z\vert\leq 1,
	\end{align*}
	we obtain
	\begin{align*}
		f\left(\alpha,\lambda;x\right)=&\frac{\gamma_{1}}{2}\left[{\rm{e}}^{-\iota x}\left(1-{\rm{e}}^{\iota x}\right)^{\alpha}+{\rm{e}}^{\iota x}\left(1-{\rm{e}}^{-\iota x}\right)^{\alpha}\right]+\frac{\gamma_{2}}{2}\left[\left(1-{\rm{e}}^{\iota x}\right)^{\alpha}+\left(1-{\rm{e}}^{-\iota x}\right)^{\alpha}\right]\nonumber\\
		&+\frac{\gamma_{3}}{2}\left[{\rm{e}}^{\iota x}\left(1-{\rm{e}}^{\iota x}\right)^{\alpha}+{\rm{e}}^{-\iota x}\left(1-{\rm{e}}^{-\iota x}\right)^{\alpha}\right],
	\end{align*}
	which loses the dependency on $\lambda$ that can then be omitted. Finally, through the relation 
	$1-{\rm{e}}^{\iota x}=2\sin{\frac{x}{2}}{\rm{e}}^{\iota \frac{x-\pi}{2}}$
	and by replacing $\gamma_{1}$ and $\gamma_{2}$ given in \eqref{eq:g1g2}, we obtain the thesis.
\end{proof}

As a confirmation of Proposition \ref{Proposition2}, we fix $\alpha=1.5,~\lambda=3$ and $\gamma_{3}=0.01$ and in Figure~\ref{fig: asymptotical behaviour} we depict the functions ${f_{M}\left(\alpha,\lambda;x\right)}$ and ${{f\left(\alpha;x\right)}}$ on $[-\pi, \pi]$. We clearly see that as $M$ increases the two plots overlap.
\begin{figure}
	\centering
	\begin{subfloat}[$M=500.$]
		{\resizebox*{4.7cm}{!}{\includegraphics[width=\textwidth]{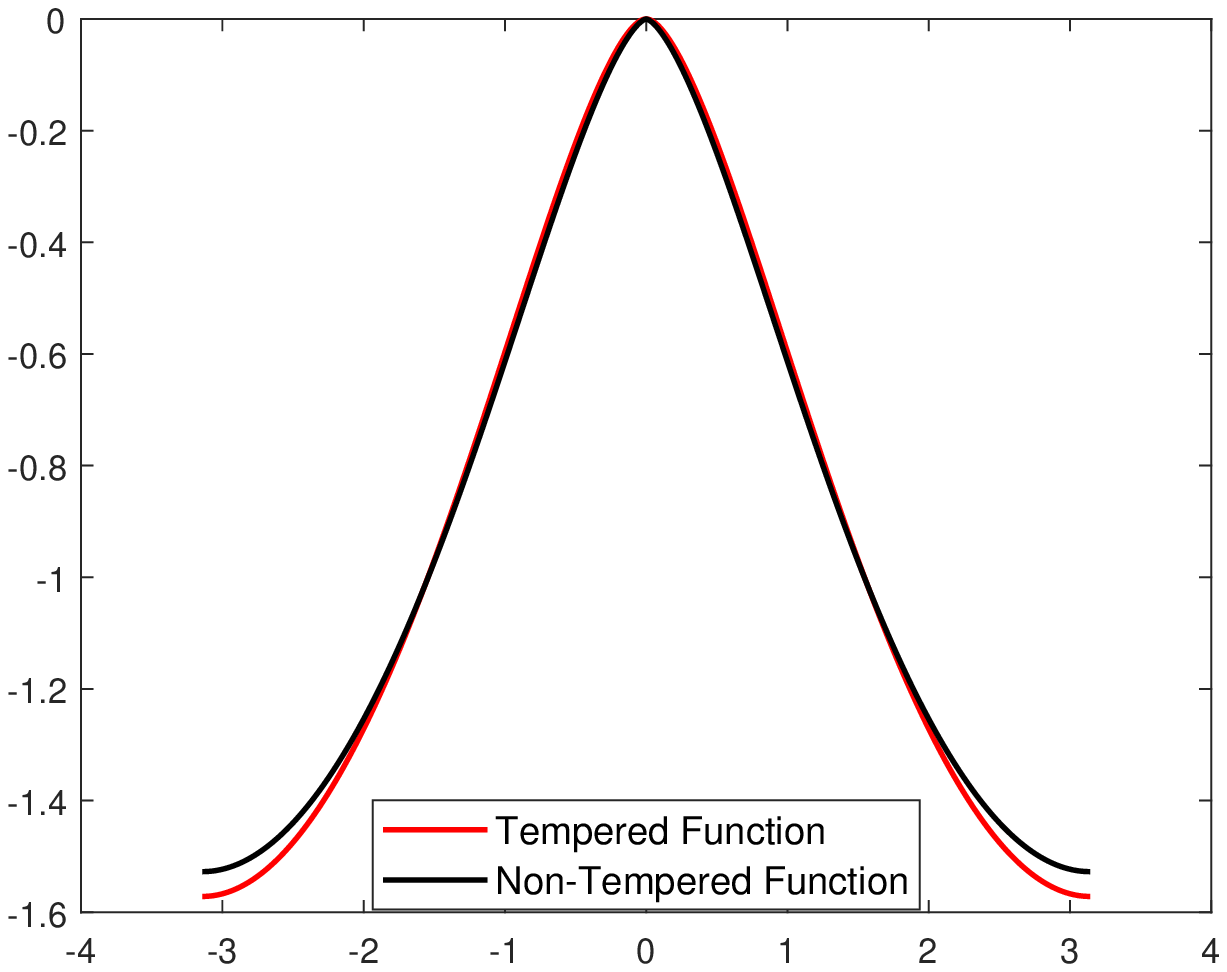}}}
	\end{subfloat}
	\begin{subfloat}[$M=1000.$]
		{\resizebox*{4.7cm}{!}{\includegraphics[width=\textwidth]{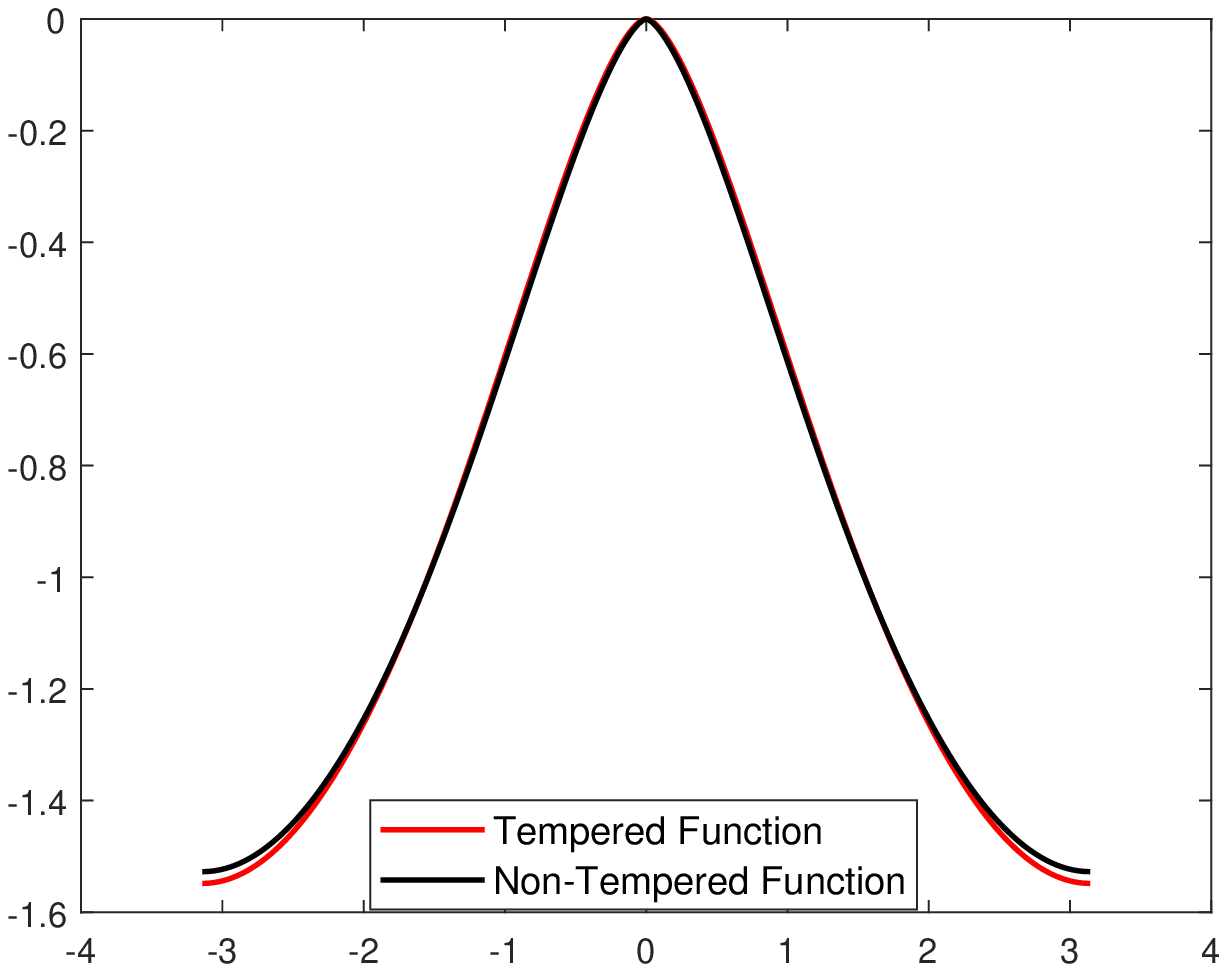}}}
	\end{subfloat}
	\begin{subfloat}[$M=5000.$]
		{\resizebox*{4.7cm}{!}{\includegraphics[width=\textwidth]{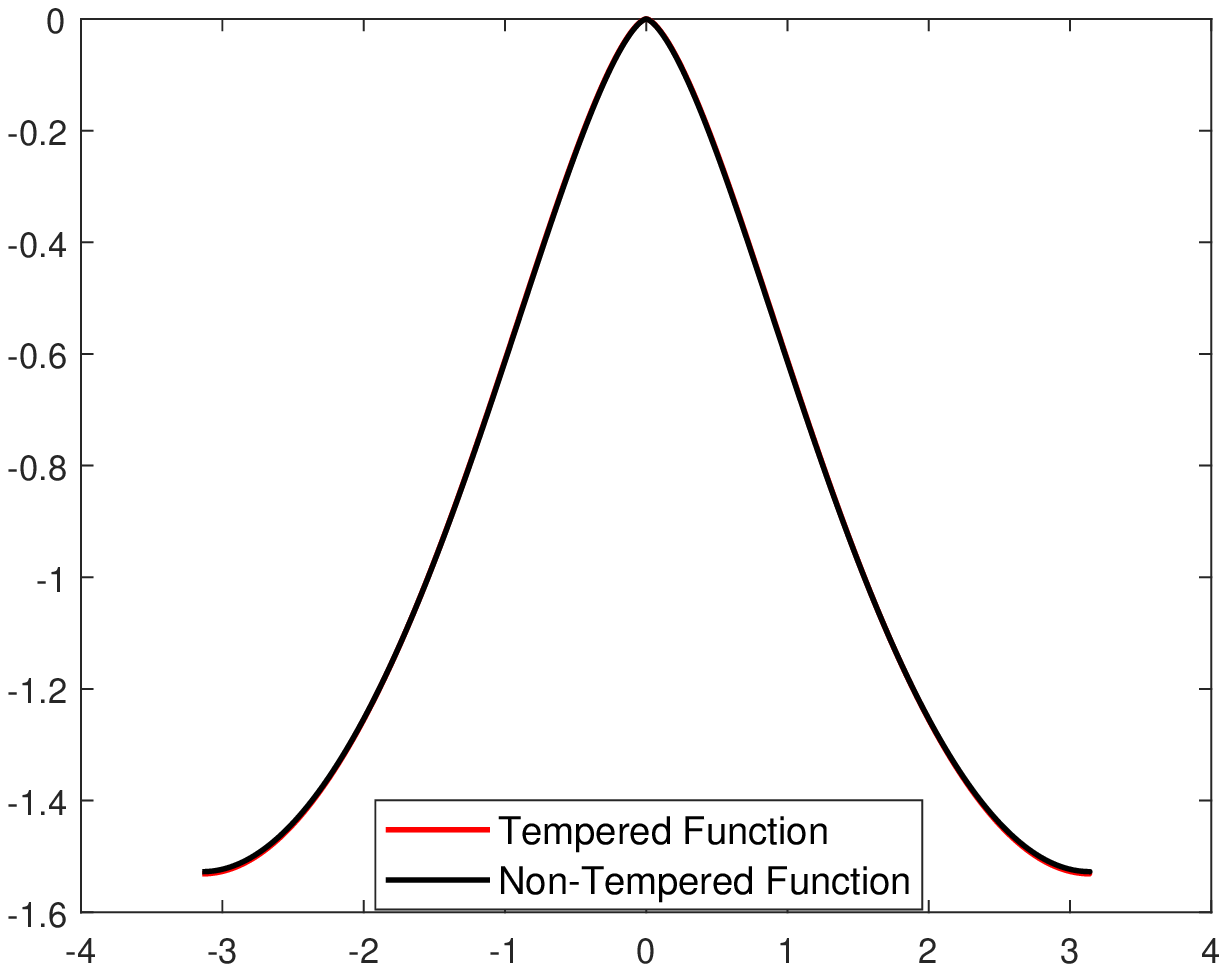}}}
	\end{subfloat}
	\caption{Plot of  $f_{M}\left(\alpha,\lambda;x\right)$ and the generating function ${f\left(\alpha;x\right)}$, {for $\alpha=1.5,~\lambda=3$, and
			$\gamma_{3}=0.01$ varying $M=500,~1000$, $5000$}.}
	\label{fig: asymptotical behaviour}
\end{figure}
\begin{remark}\label{remark3}
	The function $f\left(\alpha;x\right)$ has a zero at the origin and is negative for $x\neq 0$, when $\gamma_{3}$ lies in the interval given in Lemma \ref{Lemma1}.
\end{remark}

Concerning the coefficient matrix $\mathcal{A}_{M}^{j+1}$ defined in equation~\eqref{eq:Coe_Matrix_form}, if $c_l(x,t)=c_r(x,t)=c$, then it is independent of $j$ since
\begin{equation}\label{eq:AMconst}
	\mathcal{A}_{M}^{j+1}=I_M-
	\frac{c\tau}{2h^{\alpha}}\left({B_{M,\lambda}^{\left(2,\alpha\right)}}+(B_{M,\lambda}^{\left(2,\alpha\right)})^{T}\right)
	=I_M-\frac{c\tau}{h^{\alpha}}\mathcal{\bf B}_{M,\lambda}^{\left(2,\alpha\right)},
\end{equation}
and hence the following result follows from the previous Proposition \ref{Proposition2}. From now on, we omit the superscript $j+1$.

\begin{corollary}\label{cor_symbol_coeff_matrix}
	Let us assume that $c_l(x,t)=c_r(x,t)=c$. Then, assured that $\frac{h^\alpha}{\tau}=o(1)$, the matrix-sequence $\{\frac{h^\alpha}{\tau}\mathcal{A}_{M}\}_{M\in\mathbb{N}}$
	is generated by the function 
	\begin{equation} \label{eq:falpha}
		f_{\alpha}(x)=-c \, f\left(\alpha;x\right).
	\end{equation} 
\end{corollary}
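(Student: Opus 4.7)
The plan is to use equation \eqref{eq:AMconst} directly and then reduce the claim to Proposition \ref{Proposition2}. Under the assumption $c_l(x,t)=c_r(x,t)=c$, multiplying \eqref{eq:AMconst} by $h^\alpha/\tau$ gives
\[
\frac{h^\alpha}{\tau}\mathcal{A}_M=\frac{h^\alpha}{\tau}I_M - c\,\mathcal{\bf B}_{M,\lambda}^{(2,\alpha)}.
\]
So the task reduces to identifying the generating function of the right-hand side.

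Next, I would apply Proposition \ref{Proposition2} to the second term, using linearity of the Fourier-coefficient identification: scaling a Toeplitz matrix by a constant scales its generating function by the same constant, so $\{-c\,\mathcal{\bf B}_{M,\lambda}^{(2,\alpha)}\}_{M\in\mathbb{N}}$ is generated by $-c\,f(\alpha;x)=f_\alpha(x)$, exactly as in \eqref{eq:falpha}. Then I would treat the first term: $\frac{h^\alpha}{\tau}I_M$ is itself a (diagonal) Toeplitz matrix whose only nonzero Fourier coefficient is $b_0=\frac{h^\alpha}{\tau}$, with all others vanishing identically. By the hypothesis $\frac{h^\alpha}{\tau}=o(1)$ as $M\to\infty$ (equivalently $h\to 0$), this coefficient tends to zero, so the sequence $\{\frac{h^\alpha}{\tau}I_M\}$ is spectrally negligible in the generating-function sense.

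Finally, I would conclude by additivity: the sum of two Toeplitz sequences is Toeplitz with generating function equal to the sum of the individual ones, and one of the two summands vanishes in the limit. Hence $\{\frac{h^\alpha}{\tau}\mathcal{A}_M\}_{M\in\mathbb{N}}$ is generated by $f_\alpha(x)=-c\,f(\alpha;x)$.

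The only delicate point is to justify discarding the perturbation $\frac{h^\alpha}{\tau}I_M$ rigorously. One should either appeal to the fact that the $k$-th Fourier coefficient of the total matrix converges, for every fixed $k\in\mathbb{Z}$, to the corresponding Fourier coefficient of $f_\alpha$ (using Proposition \ref{Proposition1} to ensure absolute summability and hence the Wiener-class representation), or equivalently invoke the GLT/zero-distributed machinery in which $o(1)$ multiples of the identity lie in the equivalence class of the zero function. Either route is routine and closes the argument.
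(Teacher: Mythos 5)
Your proposal is correct and follows essentially the same route as the paper, which derives the corollary directly from the decomposition \eqref{eq:AMconst} together with Proposition \ref{Proposition2}, with the hypothesis $\frac{h^\alpha}{\tau}=o(1)$ serving precisely to discard the identity term. Your added remark on how to rigorously justify neglecting the $o(1)$ perturbation (via convergence of the Fourier coefficients or the zero-distributed/GLT framework) is a sound elaboration of a step the paper leaves implicit.
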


In the following we show that the generating function $f_{\alpha}(x)$ in \eqref{eq:falpha} gives the asymptotic spectral distribution in the case of constant diffusion coefficients. In other words $f_{\alpha}(x)$ is also the spectral symbol of the related matrix-sequence according to the definition below.

\begin{definition}\label{def5}
	Let $f:[a,b]\rightarrow\mathbb{C}$ be a measurable function. Let $\mathcal{C}_{0}(\mathbb{K})$  be the set of continuous functions with compact support over $\mathbb{K}\in\{\mathbb{C},\mathbb{R}^{+}_{0}\}$ and let $\{A_{M}\}_{M\in\mathbb{N}}$ be a sequence of matrices of size $M$ with eigenvalues $\mu_{j}(A_{M}),~j=1,2,\cdots,M$.
	We say that $\{A_{M}\}_{M\in\mathbb{N}}$ is distributed as the pair $\left(f,[a,b]\right)$ in the sense of the eigenvalues, and write
	\begin{align*}
		\{A_{M}\}_{M\in\mathbb{N}}\sim\left(f,[a,b]\right),
	\end{align*}
	if the following relation holds for all $F\in\mathcal{C}_{0}(\mathbb{C})$:
	\begin{align}\label{eq:Symbol_def}
		\lim_{M\rightarrow\infty}\frac{1}{M}\sum_{j=1}^{M}F\left(\mu_{j}(A_{M})\right)=\frac{1}{b-a}\int_{a}^bF(f(t))dt.
	\end{align}
	Thus, we write that $f$ is the (spectral) symbol of the matrix-sequence $\{A_{M}\}_{M\in\mathbb{N}}$.
\end{definition}

\begin{remark}\label{remark2}
	When $f$ is continuous, an informal interpretation of \eqref{eq:Symbol_def} is that when the matrix size is sufficiently large, the eigenvalues of $A_{M}$ can be approximated by a sampling of $f$ on a uniform equispaced grid of $[a,b]$.
\end{remark}

For Hermitian Toeplitz matrix-sequences, the following result due to Szeg\H{o} holds~\cite{grenander1984toeplitz}.

\begin{theorem}\label{theorem2}
	Let $f\in L^{1}([-\pi,\pi])$ be a real-valued function. Then,
	\begin{align*}
		\{T_{M}(f)\}_{M\in\mathbb{N}}\sim \left(f,[-\pi,\pi]\right),
	\end{align*}
	that is, the generating function of the sequence is also the (spectral) symbol. 
\end{theorem}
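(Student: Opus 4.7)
The plan is to establish Szeg\H{o}'s distribution theorem via the moment method. Since $f$ is real-valued, the Fourier coefficients in \eqref{eq:fourier_coef} satisfy $b_{-k}=\overline{b_k}$, so every $T_M(f)$ is Hermitian and its eigenvalues $\mu_j(T_M(f))$ are real. By the Weierstrass approximation theorem applied on an interval containing the essential range of $f$, verifying \eqref{eq:Symbol_def} for every $F\in\mathcal{C}_0(\mathbb{C})$ reduces to checking the identity for the monomials $F(t)=t^k$, $k\in\mathbb{N}$; the theorem then boils down to the moment asymptotics
\[
\frac{1}{M}\,\mathrm{tr}\bigl(T_M(f)^k\bigr) \;\longrightarrow\; \frac{1}{2\pi}\int_{-\pi}^{\pi} f(x)^k\,dx, \qquad M\to\infty,
\]
the case $k=0$ being trivial.

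For $f\in L^\infty([-\pi,\pi])$ the heart of the argument is the near multiplicativity of the Toeplitz map: for any bounded symbols $\varphi,\psi$, the correction $T_M(\varphi\psi)-T_M(\varphi)T_M(\psi)$ decomposes as a sum of two products of Hankel-type blocks localized at the top-left and bottom-right corners of the matrix, whose normalized trace vanishes as $M\to\infty$. Iterating this identity one obtains $\tfrac{1}{M}\bigl[\mathrm{tr}(T_M(f)^k)-\mathrm{tr}(T_M(f^k))\bigr]\to 0$, while the trace of $T_M(f^k)$ equals $M\,\widehat{f^k}(0)=\tfrac{M}{2\pi}\int_{-\pi}^{\pi}f^k$ by direct inspection of its diagonal entries. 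Combining the two facts yields the desired moment identity and hence the theorem under the boundedness assumption.

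To reach the full $L^1$ statement, I would truncate $f_n:=f\,\mathbf{1}_{\{|f|\le n\}}$: the crude estimate $\|T_M(f-f_n)\|_1\le \tfrac{M}{2\pi}\|f-f_n\|_{L^1}$ together with a Lidskii-type comparison inequality for Hermitian matrices bounds the perturbation of the left-hand side of \eqref{eq:Symbol_def} by a quantity of order $\|F\|_\infty\,\|f-f_n\|_{L^1}$ uniformly in $M$, and a standard diagonal argument transfers the bounded case to the integrable one. The main obstacle is precisely the uniform control of the Hankel-type correction in the identity for $T_M(\varphi\psi)-T_M(\varphi)T_M(\psi)$, which is the technical heart of Szeg\H{o}'s first limit theorem: in the application to Proposition \ref{Proposition1} the symbol lies in the Wiener class and absolute summability of its Fourier coefficients makes this estimate elementary, whereas in the general $L^1$ regime it requires the Avram--Parter or Tyrtyshnikov-style machinery on the clustering of singular values of Toeplitz sequences.
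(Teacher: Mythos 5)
The paper does not actually prove this statement: Theorem~\ref{theorem2} is quoted as the classical Szeg\H{o} distribution theorem with a citation to \cite{grenander1984toeplitz}, so there is no internal argument to measure yours against. Your outline is the standard modern route to that classical result (moment method plus near-multiplicativity of the Toeplitz map for bounded symbols, then an $L^1$ extension by truncation in the Tyrtyshnikov--Zamarashkin/Tilli style, see also \cite{GLT-I}), and the overall architecture is sound. Two points, however, need repair before this is a proof rather than a roadmap. First, the moment method cannot be applied to the theorem as stated: for $f\in L^1$ only, $f^k$ need not be integrable for $k\ge 2$, so the right-hand side of your moment identity may not exist, and $F\in\mathcal{C}_0(\mathbb{C})$ cannot be uniformly approximated by polynomials on an unbounded spectral range. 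The Weierstrass reduction is therefore legitimate only for the truncated symbols $f_n$, and the full statement must be carried entirely by the truncation step; your write-up presents the moment asymptotics as ``the theorem'' rather than as a lemma for the bounded case. Second, in the truncation step the perturbation of $\frac{1}{M}\sum_j F(\mu_j)$ under a Hermitian trace-norm perturbation is controlled by the \emph{Lipschitz constant} of $F$ times $\frac{1}{M}\|T_M(f-f_n)\|_{\mathrm{tr}}$ (via the Lidskii/Hoffman--Wielandt rearrangement inequality), not by $\|F\|_\infty$ as you claim; one then restricts to Lipschitz $F\in\mathcal{C}_0(\mathbb{C})$, which suffices by density. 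The trace-norm bound $\|T_M(g)\|_{\mathrm{tr}}\le \frac{M}{2\pi}\|g\|_{L^1}$ that you invoke is correct and deserves its one-line proof: $T_M(g)=\frac{1}{2\pi}\int_{-\pi}^{\pi} g(x)\, e_M(x)e_M(x)^{*}\,dx$ with $e_M(x)=({\rm e}^{\iota jx})_{j=1}^{M}$, and each integrand has trace norm $|g(x)|\,M$.

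The remaining deferred step, namely that the normalized trace of the Hankel-type corrections in $T_M(\varphi\psi)-T_M(\varphi)T_M(\psi)$ vanishes, is the genuine technical heart and is merely asserted. For the use this theorem is put to in the paper this is harmless --- Proposition~\ref{Proposition1} shows the relevant symbol is in the Wiener class, where the Fourier coefficients are absolutely summable, the corner blocks have $o(M)$ trace norm by a direct tail estimate, and the whole $L^1$ truncation machinery is unnecessary. If you intend the proof for general $f\in L^1$, you must either supply the Hankel estimate for bounded symbols explicitly or accept that you are re-citing the same literature the paper cites.
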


When the diffusion coefficients are constant and equal to $c$, according to Corollary \ref{cor_symbol_coeff_matrix}, the eigenvalue distribution of the coefficient matrix-sequence $\{\mathcal{A}_{M}\}_{M\in\mathbb{N}}$ properly scaled is
\[
\left\{\frac{h^\alpha}{\tau}\mathcal{A}_{M}\right\}\sim \left(f_{\alpha},[-\pi,\pi]\right).
\]

\begin{remark}
	For $\gamma_3=0$ the function $f\left(\alpha;x\right)$ coincides with the symbol of the non-tempered case retrieved in \cite{donatelli2020multigrid} and in \cite{lin2020accuracy} when considering the same shift $p_1=1$, $p_2=0.$
\end{remark}


The previous symbol analysis could be easily applied also to prove the stability of CN-TWSGD
in the case of constant diffusion coeffients, which is already proved in \cite{bu2021high} with other mathematical tools. Indeed,
in order to have the stability of the CN-TWSGD scheme \eqref{eq:Matrix_form}, the spectral radius of ${(I_{M}-\mathcal{M}_{M}^{j+1})^{-1}}{(I_{M}+\mathcal{M}_{M}^j)}$ should be less than one, which follows applying the following result.
\begin{lemma}\label{Lemma2}
	(Grenander-Szeg\H{o} theorem \cite{chan1991toeplitz}) Let $T_{n}$ be a Toeplitz matrix  with generating function $f$ belonging to the Wiener class. 
	Denote with $f_{min}$ and $f_{max}$ the minimum and maximum values of $f$.
	If $f_{min}<f_{max}$, then all the eigenvalues of $T_{n}$ satisfy
	\begin{equation*}\label{eq:GSTb}
		f_{min}<\mu(T_{n})< f_{max},
	\end{equation*}
	for all $n>0$.
\end{lemma}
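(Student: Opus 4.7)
My plan is to use the Rayleigh--Ritz characterization of Hermitian eigenvalues together with Parseval's identity, which is the standard route to Grenander--Szeg\H{o} localization. First, I would note that the Wiener-class hypothesis does two things at once: the absolute summability $\sum_{k}|b_{k}|<\infty$ makes $f$ the uniform limit of its Fourier partial sums and hence continuous, so $f_{\min}$ and $f_{\max}$ are actually attained; moreover, $f$ being real-valued implies $b_{-k}=\overline{b_{k}}$ through \eqref{eq:fourier_coef}, so $T_{n}$ is Hermitian and its spectrum is real.

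The heart of the argument is the classical identity
\[
v^{*}T_{n}v \;=\; \frac{1}{2\pi}\int_{-\pi}^{\pi} f(x)\,|P_{v}(x)|^{2}\,dx, \qquad P_{v}(x) \;=\; \sum_{j=1}^{n} v_{j}\,{\rm{e}}^{\iota j x},
\]
which follows by substituting \eqref{eq:fourier_coef} into the bilinear form $\sum_{i,j}\overline{v_{i}}v_{j}\,b_{i-j}$ and swapping sum and integral. For a unit vector $v$, Parseval yields $\frac{1}{2\pi}\int |P_{v}|^{2}\,dx = 1$, so $v^{*}T_{n}v$ is a probability-weighted average of the values of $f$. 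The Rayleigh--Ritz principle then immediately sandwiches every eigenvalue between $f_{\min}$ and $f_{\max}$.

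The remaining issue is to upgrade the weak inequalities to strict ones. If $\mu_{\min}(T_{n})=f_{\min}$, then any unit eigenvector $v$ associated with the minimum would make the non-negative integrand $(f-f_{\min})\,|P_{v}|^{2}$ vanish almost everywhere. Since $f_{\min}<f_{\max}$ and $f$ is continuous, the set $\{x : f(x) > f_{\min}\}$ is non-empty and open, so $P_{v}$ would have to vanish on an open arc of $(-\pi,\pi]$. But $P_{v}$ is (up to the factor $e^{\iota x}$) a polynomial in $e^{\iota x}$ of degree at most $n-1$, hence has at most finitely many zeros, forcing $v=0$ and contradicting $\|v\|=1$. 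The bound $\mu_{\max}(T_{n})<f_{\max}$ follows by the symmetric argument applied to $-f$.

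I expect the Rayleigh--Ritz sandwich and the Parseval manipulation to be essentially routine. The delicate step is the strict inequality, where the Wiener-class hypothesis is used in an essential way: without uniform convergence of the Fourier series (and hence continuity of $f$), one could only locate a set of positive measure on which $f>f_{\min}$, and the finite-zero rigidity of trigonometric polynomials would no longer close the contradiction cleanly. This is the place where I would be most careful.
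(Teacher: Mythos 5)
Your proof is correct and complete: the quadratic-form identity $v^{*}T_{n}v=\frac{1}{2\pi}\int_{-\pi}^{\pi}f(x)\lvert P_{v}(x)\rvert^{2}\,dx$, Parseval, Rayleigh--Ritz, and the finite-zero rigidity of trigonometric polynomials for the strict inequalities constitute exactly the classical Grenander--Szeg\H{o} argument. The paper does not prove this lemma at all --- it is stated as a known result with a citation --- so there is no internal proof to compare against; your reconstruction matches the standard one found in the cited references. One small remark: your closing worry is overly cautious. Even without continuity of $f$ (i.e.\ for general real-valued $f\in L^{1}$ with essential infimum and supremum), the set $\{x: f(x)>f_{\min}\}$ has positive measure, and a nonzero trigonometric polynomial of degree at most $n$ cannot vanish on \emph{any} set of positive measure, so the contradiction still closes; the Wiener/continuity hypothesis is a convenience that lets you speak of attained minima and open sets, not an essential ingredient of the strictness step.
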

It follows that, for $\lambda\geq0$ and $1\leq\alpha\leq 2$, when $\gamma_{3}$ lies in the interval given in Lemma \ref{Lemma1}, the symbol $f(\alpha; x)$ in \eqref{eq:symf} is negative, then the matrix $\mathcal{\bf B}_{M,\lambda}^{\left(2,\alpha\right)}$ is negative definite and hence the numerical scheme \eqref{eq:Matrix_form} is stable for  $c_l(x,t)=c_r(x,t)=c$ since all the eigenvalues of the matrix ${(I_{M}-\mathcal{M}_{M}^{j+1})^{-1}}{(I_{M}+\mathcal{M}_{M}^j)}$ are in modulus smaller than one.

From now on, since the generating function $f_\alpha$ is also the (spectral) symbol, we will use the two terms as synonyms. However, in a general setting, the two terms have a different meaning (see \cite{GLT-I} and references therein).

\section{A smoothing analysis for multigrids applied to TFDEs}\label{sect:mgm}
Multigrid methods have already proven to be effective solvers as well as valid preconditioners for Krylov methods when numerically approaching $\mbox{FDEs}$ \cite{moghaderi2017spectral,donatelli2020multigrid,pang2012multigrid}. A multigrid method combines two iterative methods called smoother and coarse grid correction.  The coarser matrices can either be obtained by projection (Galerkin approach) or by rediscretization (geometric approach). In case only a single coarser level is given we talk about Two Grids Methods (TGMs), while in presence of more levels we talk about V-cycle. In this section, we investigate the convergence of TGM in Galerkin form for the discrete CN-TWSGD scheme when considering $c_{l}(x,t)=c_{r}(x,t)=c>0$. In this framework, the coefficient matrix $\mathcal{A}_M$ in \eqref{eq:AMconst} is Toeplitz so damped Jacobi as a smoother is a good choice \cite{sun1997note}.

Note that we are allowed to use MGMs as the coefficient matrix is symmetric positive definite, thanks to Lemma~\ref{Lemma2}, because its symbol $f_{\alpha}$ is nonnegative and not identically zero, cf. Corollary~\ref{cor_symbol_coeff_matrix}.

The TGM convergence analysis for TFDEs was already investigated in \cite{bu2021multigrid}. Here, we derive similar results using the symbol $f_{\alpha}$ in \eqref{eq:falpha} and the theory of multigrid methods for Toeplitz matrices, see \cite{fiorentino1991multigrid}.

Following the analysis in~\cite{chan1998multigrid}, given a symmetric positive definite matrix $A_M$, we call $D$ the diagonal of $A_M$. Moreover, as a matter of convenience, we only consider post-smoothing and call $S$ the post-smoothing iteration matrix, while with $P_M$ we denote a full rank prolongation matrix $P_M\in \mathbb{R}^{M\times k}$ with $k<M$. Then, the iteration matrix of Galerkin TGM is given by
\begin{equation*}\label{eq:TGM}
	TGM=S\Bigg[I_M-P_M(P_M^{T}{A}_{M}P_M)^{-1}P_M^{T}{A}_{M}\Bigg].
\end{equation*}

Thanks to the symmetric positive definite property of the matrix $A_M$, we can define the following inner products:
\begin{equation*}\label{eq:inner_Pro}
	\langle u_{1},u_{2}\rangle_{0}=\langle Du_{1},u_{2}\rangle,\quad \langle u_{1},u_{2}\rangle_{1}=\langle A_{M}u_{1},u_{2}\rangle,\quad \langle u_{1},u_{2}\rangle_{2}=\langle D^{-1}A_{M}u_{1},A_{M}u_{2}\rangle,
\end{equation*}
where $\langle \cdot,\cdot\rangle$ is the Euclidean inner product, and their respective norms $\vert\vert\cdot\vert\vert_{j}$, for $j=0,1,2$.  

\begin{theorem}[Ruge-St\"uben \cite{ruge1987algebraic}]\label{theorem4}
	Let $A_M$ be a symmetric positive definite matrix and $S$ be the post-smoothing iteration matrix. Assume that $\exists\ \sigma>0$ such that
	\begin{equation}\label{eq:Smoothing_pro}
		\vert\vert S{\rm{e}}_{h}\vert\vert_{1}^{2}\leq\vert\vert {\rm{e}}_{h}\vert\vert_{1}^{2}-\sigma\vert\vert {\rm{e}}_{h}\vert\vert_{2}^{2},\quad\forall~{\rm{e}}_{h}\in\mathbb{R}^{M},
	\end{equation}
	and that $\exists\ \delta>\sigma$ such that
	\begin{equation}\label{eq:App_Pro}
		\min_{y\in\mathbb{R}^{k}} \vert\vert {{\rm{e}}}_{h}-P_My\vert\vert_{0}^{2}\leq\delta\vert\vert {\rm{e}}_{h}\vert\vert_{1}^{2},\quad\forall~{\rm{e}}_{h}\in\mathbb{R}^{M},
	\end{equation}
	then 
	\begin{equation*}\label{eq:norm_TGM}
		\vert\vert \mbox{TGM}\vert\vert_{1}\leq\sqrt{1-\frac{\sigma}{\delta}}.
	\end{equation*}
\end{theorem}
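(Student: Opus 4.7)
The plan is to decompose the TGM iteration as $\mbox{TGM}=S\cdot\mbox{CGC}$, where $\mbox{CGC}=I_{M}-P_{M}(P_{M}^{T}A_{M}P_{M})^{-1}P_{M}^{T}A_{M}$ denotes the coarse grid correction, and to bound the two factors separately in the $\|\cdot\|_{1}$ norm. My starting point is the identity $A_{M}\cdot\mbox{CGC}=A_{M}-A_{M}P_{M}(P_{M}^{T}A_{M}P_{M})^{-1}P_{M}^{T}A_{M}$, which is symmetric: this reveals $\mbox{CGC}$ as the $\langle\cdot,\cdot\rangle_{1}$-orthogonal projector onto $\ker(P_{M}^{T}A_{M})$. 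Two consequences I would exploit are (i) the contraction estimate $\|\mbox{CGC}\,e_{h}\|_{1}\leq\|e_{h}\|_{1}$, and (ii) the relation $P_{M}^{T}A_{M}\,(\mbox{CGC}\,e_{h})=0$ for every $e_{h}\in\mathbb{R}^{M}$.

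The heart of the argument is to turn the approximation property \eqref{eq:App_Pro} into the reverse-type estimate $\|\mbox{CGC}\,e_{h}\|_{1}^{2}\leq\delta\,\|\mbox{CGC}\,e_{h}\|_{2}^{2}$ valid for every $e_{h}$. Setting $v=\mbox{CGC}\,e_{h}$, I would use property (ii) to insert a free summand inside the Euclidean inner product,
\[
\|v\|_{1}^{2}=\langle A_{M}v,v\rangle=\langle A_{M}v,v-P_{M}y\rangle\qquad\forall y\in\mathbb{R}^{k},
\]
then split $\langle A_{M}v,v-P_{M}y\rangle=\langle D^{-1/2}A_{M}v,\,D^{1/2}(v-P_{M}y)\rangle$ and apply Cauchy-Schwarz to get
\[
\|v\|_{1}^{4}\leq\|v\|_{2}^{2}\,\|v-P_{M}y\|_{0}^{2}.
\]
Minimising in $y$ and invoking \eqref{eq:App_Pro} with $e_{h}=v$ bounds the last factor by $\delta\|v\|_{1}^{2}$, and dividing through yields the desired inequality.

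The final step would apply the smoothing property \eqref{eq:Smoothing_pro} with $e_{h}$ replaced by $\mbox{CGC}\,e_{h}$ and combine it with the previous estimate and the contraction (i):
\[
\|\mbox{TGM}\,e_{h}\|_{1}^{2}=\|S\,\mbox{CGC}\,e_{h}\|_{1}^{2}\leq\|\mbox{CGC}\,e_{h}\|_{1}^{2}-\sigma\|\mbox{CGC}\,e_{h}\|_{2}^{2}\leq\Bigl(1-\tfrac{\sigma}{\delta}\Bigr)\|\mbox{CGC}\,e_{h}\|_{1}^{2}\leq\Bigl(1-\tfrac{\sigma}{\delta}\Bigr)\|e_{h}\|_{1}^{2},
\]
from which the stated bound on $\|\mbox{TGM}\|_{1}$ follows by taking the supremum over $e_{h}$. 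The hypothesis $\delta>\sigma$ serves only to make the constant $1-\sigma/\delta$ lie in $(0,1)$, so that the conclusion is a genuine contraction factor.

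The step I expect to be the main obstacle is the Cauchy-Schwarz bridging inequality between the two inner products $\langle\cdot,\cdot\rangle_{0}$ and $\langle\cdot,\cdot\rangle_{2}$: the approximation hypothesis lives in the $D$-weighted norm while the smoothing hypothesis lives in the $D^{-1}A_{M}$-weighted one, and it is the cancellation $P_{M}^{T}A_{M}\,(\mbox{CGC}\,e_{h})=0$ — i.e.\ the $A_{M}$-orthogonality of $\mbox{CGC}\,e_{h}$ to $\mathrm{range}(P_{M})$ — that allows one to insert the free vector $P_{M}y$ inside $\langle A_{M}v,\cdot\rangle$ and couple the two weighted norms.
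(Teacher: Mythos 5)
Your argument is correct: the decomposition $\mbox{TGM}=S\cdot\mbox{CGC}$, the observation that $\mbox{CGC}$ is the $\langle\cdot,\cdot\rangle_{1}$-orthogonal projector with $P_{M}^{T}A_{M}(\mbox{CGC}\,e_{h})=0$, the Cauchy--Schwarz bridge $\Vert v\Vert_{1}^{4}\leq\Vert v\Vert_{2}^{2}\,\Vert v-P_{M}y\Vert_{0}^{2}$, and the final combination are exactly the classical Ruge--St\"uben proof. The paper itself states this theorem without proof, citing \cite{ruge1987algebraic}, so there is no in-paper argument to compare against; your reconstruction matches the standard one in that reference and is complete.
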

The inequalities in equations \eqref{eq:Smoothing_pro} and \eqref{eq:App_Pro} are well known and go with the names of the smoothing property and the approximation property, respectively. To prove the TGM convergence, these two conditions are investigated separately in the next two subsections.

\subsection{Smoothing property}
It is well known that in case of a symmetric positive definite Toeplitz coefficient matrix, and Jacobi as smoother, the smoothing property \eqref{eq:Smoothing_pro} is satisfied whenever the smoother converges \cite{chan1998multigrid,ruge1987algebraic}.
For a symmetric positive definite matrix $A_M$, weighted Jacobi iteration matrix is $S_\omega=I_M-\omega D^{-1}A_M$ and its convergence is guaranteed if $ 0<\omega<\frac{2}{\rho(D^{-1}A_M)}$, with $D$ being the diagonal of $A_M$, and $\rho(D^{-1}A_M)$ the spectral radius of $D^{-1}A_M$.
In the constant coefficients case, i.e., $c_l(x,t)=c_r(x,t)=c$, the scaled coefficient matrix $\frac{h^\alpha}{\tau}\mathcal{A}_M$ is symmetric positive definite and has a Toeplitz structure such that 
$D=a_0I_M$, where $\tilde{a}_0$ is Fourier coefficient of order zero of $f_\alpha$ and 
$a_0=\tilde{a}_0 + \frac{h^\alpha}{\tau}$.
Therefore, according to Corollary \ref{cor_symbol_coeff_matrix}, we deduce that the weighted Jacobi satisfies the smoothing property \eqref{eq:Smoothing_pro} whenever
\begin{align*}
	0<\omega&<\xi=\frac{2a_{0}}{\vert\vert {f_\alpha}\vert\vert_{\infty}}.
\end{align*}
From the expression of $\mathcal{A}_M$ in \eqref {eq:AMconst}, we have
\begin{align*}
	a_{0}&=\frac{h^\alpha}{\tau}-c\left(g_{1,\lambda}^{(2,\alpha)}-\phi(\lambda)\right),\\
	&=\frac{h^\alpha}{\tau}-c(1-\frac{\alpha}{2}(\alpha+1)-\gamma_{3}(\alpha+2)),
\end{align*}
while, thanks to the monotonicity of $f_\alpha$ 
we have
\[
\vert\vert {f_\alpha}\vert\vert_{\infty}=-f_\alpha(\pi)=c \, 2^{\alpha}(1-\alpha-4\gamma_{3}).
\]
Therefore, neglecting the term $\frac{h^\alpha}{\tau}$ in $a_0$, we have 
\begin{equation*}
	\xi \, \approx \, \frac{(1-\frac{\alpha}{2}(\alpha+1)-\gamma_{3}(\alpha+2))}
	{2^{\alpha-1}(1-\alpha-4\gamma_{3})}.
\end{equation*}

Figure \ref{fig:Xi} depicts $\xi$ varying $\alpha\in [1,2]$, and taking 11 equispaced values of $\gamma_3\in[0.001,0.06]$. For each value of $\gamma_3$ we get a curve. The solid part of each depicted curve corresponds to the values of $\alpha$ that are allowed with that choice of $\gamma_3$. The values $\gamma_3=0.01,0.00235$, represented by a different marker, are those we use in our numerical tests. We note that $\xi$ is always larger than $0.7$ and in particular of the value $0.5$ proved in \cite{bu2021multigrid}.

\begin{figure}
	\centering
	\includegraphics[width=0.6\textwidth]{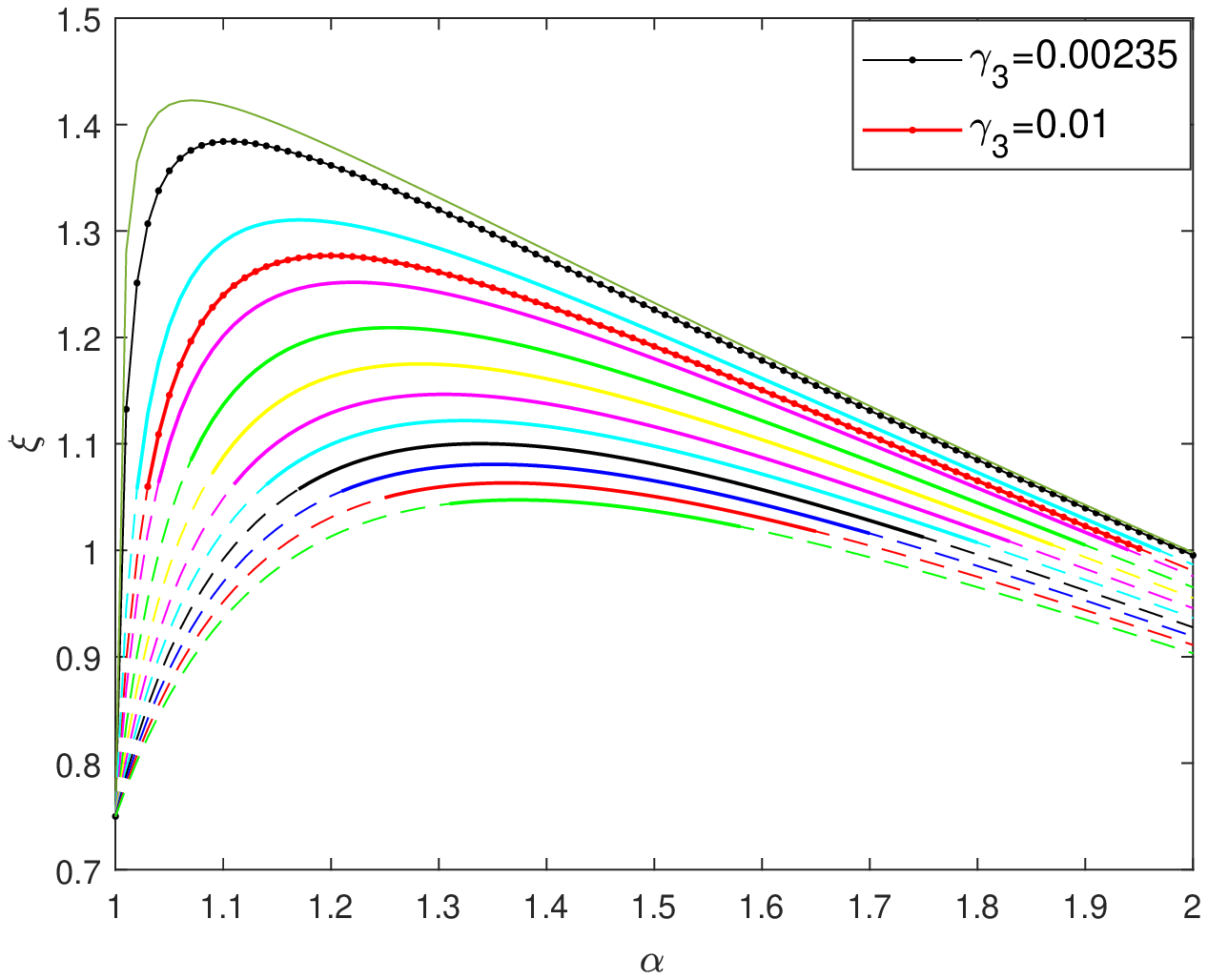}
	\caption{Plots of $\xi$ for different values of $\gamma_{3}$ ($\gamma_{3}>0$, $1\leq\alpha\leq2$).}
	\label{fig:Xi}
\end{figure}

Following the idea behind the optimum parameter for the Laplacian operator which is $2/3$ for the range $[0,1]$, in order to choose a good relaxation parameter $\omega \in [0,\xi]$ we propose 
\begin{equation}\label{eq:omega}
	\omega^\star=\frac{2}{3}\xi,
\end{equation}
which provides a good convergence rate as confirmed in the numerical results in Section \ref{sect:num}. 	

\subsection{Approximation property}

The approximation property proved in \cite{bu2021multigrid} can be easily deduced combining the symbol $f_\alpha$ with the convergence analysis in \cite{capizzano2002NM}. 	

Let the projector $P_M$ be the classical linear interpolation such that
\[
P_M^T=\frac{1}{2}
\begin{bmatrix}
	1 & 2 & 1 & & & & &\\
	& & 1 & 2 & 1 & & & \\
	& & &  \ddots & \ddots & \ddots & & &\\
	& & & &  & 1 & 2 & 1 
\end{bmatrix}
=K_MT_M(p),
\]
where $K_M\in\mathbb{R}^{\frac{M-1}{2} \times M}$ for $M$ odd, i.e., $[K_M]_{i,j}=1$ for $j=2i$, $i=1\dots,\frac{M-1}{2}$, and $p(x)=1+\cos(x)$.

Thanks to Corollary \ref{cor_symbol_coeff_matrix} and Remark \ref{remark3}, the symbol $p$ of the projector satifies
\begin{equation} \label{eq:tgm}
	\lim_{x\to 0} \sup \frac{p(x+\pi)^2}{f_\alpha(x)}=0.
\end{equation}
Hence, thanks to Lemma 5.2 in \cite{capizzano2002NM}, the TGM has a linear convergence. 

Note that the limit \eqref{eq:tgm} vanishes even removing the power two at the numerator.
This gives a linear convergence even replacing the TGM with the V-cycle, see \cite{arico2004v}. Moreover, the proposed multigrid method is so robust that the Galerkin approach used in the theoretical analysis can be replaced with the geometric approach. Indeed, the Galerkin approach is too expensive when applied to a full-matrix because the algebraic structure at the coarser levels is lost, which is crucial to remain within a $O(M\log(M))$ computational cost for the matrix-vector product. Moreover, the rediscretization matrix-sequence and the matrix-sequence obtained by Galerkin projections are spectrally equivalent and this represents a motivation for the good convergence speed of the method, when using the rediscretization as well.  

Finally, even in the case of variable diffusion coefficients, the proposed multigrid method has a linear convergence assuming that the coefficient functions are strictly positive and bounded, see \cite{capizzano2002NM}.

\section{\bf Numerical Examples}\label{sect:num}
In this section, we present some numerical examples, taken from \cite{bu2021multigrid,deng2018variational}, to verify the effectiveness of the MGMs introduced in the previous section. In order to improve the robustness of MGMs it is common to use them as preconditioners for Krylov methods.
In our case, we  apply multigrid preconditioner, the Chan circulant preconditioner $P_C$ \cite{chan1993fft}, and the Laplacian preconditioner $P_{2}$, which in \cite{donatelli2016spectral} was shown to be efficient for $\alpha$ not far from $2$. Our multigrid solver consists in a V-cycle with $\nu_1$ and $\nu_2$ iterations of pre and post-smoother, respectively, which we shorten by employing the notation $V(\nu_1,\nu_2)$. In case where $V(\nu_1,\nu_2)$ is used as preconditioner, we denote it by $PV(\nu_1,\nu_2)$. 

In the following tables, we fix $M=N$, where $M$ and $N$ denote the number of spatial and time grid points. The preconditioned CG and GMRES are computationally performed using built-in \textit{pcg} and \textit{gmres} Matlab functions, respectively. The stopping criterion is $\frac{\| r^{k}\|}{\| r^{0}\|}<\mbox{tol}$, where $\|\cdot\|$ denotes the Euclidean norm, $r^{k}$ is the residual vector at the $k$-th iteration and $\mbox{tol}=10^{-7}$ is the tolerance. The initial guess is fixed as the zero vector. In case of a time-dependent TFDE, the reported iterations are the average number of iterations at each time-step and the initial guess in the solution computed at the previous time step.

{\bf Example 1.} Consider the TFDE
\begin{align}\label{eq_Ex1}
	\begin{cases}
		\frac{\partial u(\,x,t)\,}{\partial t}=\,{_{0}{\mathbb{{D}}_{x}^{\alpha,\lambda}}u(\,x,t)\,}+\,{_{x}{\mathbb{{D}}_{1}^{\alpha,\lambda}}u(\,x,t)\,}+f(\,x,t)\,,\\
		u(\,0,t)\ =0,\quad u(\,1,t)\,=0,\quad t\in[\,0,1]\,,\\
		u(\,x,0)\,=x^3\left(1-x\right)^3,\quad x\in[\,0,1]\,,
	\end{cases}       
\end{align}
with source term and exact solution taken from \cite{bu2021multigrid},
\begin{align*}
	f\left(x,t\right)=&\left(2\lambda^{\alpha}-1\right){\rm{e}}^{-t} x^3\left(1-x\right)^3-{\rm{e}}^{-\lambda x-t}\big(_{0}{\mathcal{D}_{x}^{\alpha}}\big[{\rm{e}}^{\lambda x}\left(x^3-3x^4+3x^5-x^6\right)\big]\big)\newline\\
	&-{\rm{e}}^{\lambda x-t}\big(_{x}{\mathcal{D}_{1}^{\alpha}}\big[{\rm{e}}^{-\lambda x}\left((1-x)^3-3(1-x)^4+3(1-x)^5-(1-x)^6\right)\big]\big),
	\\u(\,x,t)\,=&{\rm{e}}^{-t} x^3\left(1-x\right)^3.\newline\\
\end{align*}
We recall that in \cite{bu2021multigrid} the convergence of MGMs was proven for $\alpha\in(1.26,1.71)$ and $\omega\in(0,0.5]$. In their first example the authors considered the TFDE in equation \eqref{eq_Ex1} with $\lambda=0.5,\alpha=1.5$ and $\gamma_3=0.01$ to provide numerical evidences that support their theoretical results. Here we consider similar settings by fixing $\gamma_3=0.01$, $\lambda\in\lbrace 0;2;10\rbrace$, $\alpha\in\lbrace 1.2;1.5;1.8\rbrace$ and test the relaxation parameter $\omega^\star$ in \eqref{eq:omega}.

Table \ref{tab:T11} shows the iterations to tolerance of $V(1,1)$ as standalone solver for the above combinations of parameters and different values of $\omega$. First we observe that varying $\lambda$ does not seem to significantly affect the overall iterations, which somehow reflect the fact that the generating function for the tempered case is the same as in the non tempered one, i.e. $\lambda=0$. Moreover, it is not surprising that $\lambda=0.5$, as taken in \cite{bu2021multigrid}, yields similar results as for $\lambda=0$ (results not reported). We note that for the choice of the parameters $\alpha=1.2$ and $\alpha=1.8$, $\omega^\star$ leads to the smallest number of iterations, while for $\alpha=1.5$ our optimal weight $\omega^\star$ seems to slightly overestimate the numerically optimal one, since lower values of $\omega$ lead to fewer iterations. In this regard, we recall that $\omega^\star$ is directly linked to the symbol, which in turn is obtained by letting the size of the Toeplitz tend to infinity. This means that by increasing the matrix size, $\omega^\star$ becomes more suitable and, therefore, the iterations should decrease, which seems to be our case.

\begin{table}
	\caption{Example 1 - average number of iterations to tolerance of $V(1,1)$ with $\omega^\star=0.85,0.79,0.71$ for $\alpha=1.2,1.5,1.8$, respectively, and ${\gamma_{3}=0.01}$.}
	\begin{small}
		\setlength{\tabcolsep}{8pt}
		\begin{center}
			\begin{tabular}{c c c c c c c c c c c}
				\hline
				\hline
				\noalign{\vskip 1mm}
				$\lambda$&$\alpha$&$M=N$& $\omega=0.5$& $\omega=0.6$& $\omega=0.7$& $\omega=0.8$&$\omega=0.9$&$\omega^\star$\\ 
				\hline
				$     $ &${}$&$2^6$&${7}$&${6}$&${5}$&${4}$&${4}$&${4}$\\
				$       $ &$1.2$&$2^7$&${6}$&${5}$&${4}$&${4}$&${4}$&${3}$\\
				$     $ &${}$&$2^8$&${6}$&${5}$&${4}$&${3}$&${4}$&${3}$\\
				$     $ &${}$&$2^9$&${5}$&${4}$&${4}$&${3}$&${3}$&${3}$\\
				\cline{2-9}
				$     $ &${}$&$2^6$&${8}$&${6}$&${5}$&${6}$&${9}$&${6}$\\
				$\boldsymbol{0}$
				&$1.5$&$2^7$&${7}$&${6}$&${5}$&${6}$&${9}$&${6}$\\
				$     $ &${}$&$2^8$&${7}$&${6}$&${5}$&${6}$&${8}$&${6}$\\
				$     $ &${}$&$2^9$&${6}$&${6}$&${5}$&${5}$&${8}$&${5}$\\
				\cline{2-9}
				$     $ &${}$&$2^6$&${11}$&${8}$&${8}$&${11}$&${19}$&${8}$\\
				$       $ &$1.8$&$2^7$&${10}$&${8}$&${7}$&${11}$&${18}$&${7}$\\
				$     $ &${}$&$2^8$&${10}$&${8}$&${7}$&${10}$&${18}$&${7}$\\
				$     $ &${}$&$2^9$&${9}$&${7}$&${7}$&${10}$&${17}$&${7}$\\
				\hline
				\hline
				$     $ &$~$&$2^6$&${7}$&${5}$&${5}$&${4}$&${4}$&${4}$\\
				$      $ &$1.2$&$2^7$&${6}$&${5}$&${4}$&${4}$&${4}$&${3}$\\
				$     $ &$~$&$2^8$&${6}$&${5}$&${4}$&${3}$&${4}$&${3}$\\
				$     $ &$~$&$2^9$&${5}$&${4}$&${4}$&${3}$&${3}$&${3}$\\
				\cline{2-9}
				$     $ &$~$&$2^6$&${8}$&${7}$&${5}$&${6}$&${9}$&${6}$\\
				$ \boldsymbol{2} $ &$1.5$&$2^7$&${7}$&${6}$&${5}$&${6}$&${9}$&${6}$\\
				$    $ &${}$&$2^8$&${6}$&${5}$&${5}$&${6}$&${8}$&${6}$\\
				$    $ &${}$&$2^9$&${6}$&${5}$&${5}$&${5}$&${8}$&${5}$\\
				\cline{2-9}
				$     $ &${}$&$2^6$&${11}$&${9}$&${8}$&${11}$&${19}$&${8}$\\
				$       $ &$1.8$&$2^7$&${10}$&${8}$&${7}$&${11}$&${19}$&${7}$\\
				$     $ &${}$&$2^8$&${10}$&${8}$&${7}$&${10}$&${18}$&${7}$\\
				$     $ &${}$&$2^9$&${9}$&${7}$&${7}$&${10}$&${17}$&${7}$\\
				\hline
				\hline
				$     $ &${}$&$2^6$&${7}$&${5}$&${4}$&${4}$&${5}$&${5}$\\
				$       $ &$1.2$&$2^7$&${6}$&${5}$&${4}$&${3}$&${4}$&${4}$\\
				$     $ &${}$&$2^8$&${5}$&${4}$&${4}$&${3}$&${4}$&${3}$\\
				$     $ &${}$&$2^9$&${5}$&${4}$&${3}$&${3}$&${3}$&${3}$\\
				\cline{2-9}
				$     $ &${}$&$2^6$&${9}$&${7}$&${6}$&${8}$&${11}$&${7}$\\
				$\boldsymbol{10}$&$1.5$&$2^7$&${8}$&${6}$&${5}$&${7}$&${10}$&${7}$\\
				$     $ &${}$&$2^8$&${7}$&${5}$&${5}$&${6}$&${9}$&${6}$\\
				$     $ &${}$&$2^9$&${6}$&${5}$&${4}$&${6}$&${8}$&${6}$\\
				\cline{2-9}
				$     $ &${}$&$2^6$&${11}$&${9}$&${8}$&${12}$&${21}$&${8}$\\
				$     $ &$1.8$&$2^7$&${10}$&${8}$&${7}$&${11}$&${20}$&${8}$\\
				$     $ &${}$&$2^8$&${10}$&${8}$&${7}$&${11}$&${18}$&${7}$\\
				$     $ &${}$&$2^9$&${9 }$&${7}$&${7}$&${10}$&${17}$&${7}$\\
				\hline
			\end{tabular}
		\end{center}
	\end{small}
	\label{tab:T11}
\end{table}
{\bf Example 2.} In this example, taken from \cite{deng2018variational}, we consider the steady tempered fractional advection-dispersion model defined as
\begin{equation*}\label{eq_Ex2}
	-\frac{1}{2}\big({_{a}{\mathbb{{D}}_{x}^{\alpha,\lambda}}u(\,x)\,}+\,{_{x}{\mathbb{{D}}_{b}^{\alpha,\lambda}}u(\,x)\,}\big)=f(\,x)\,,
\end{equation*}
with boundary conditions $u(0)=u(1)=0$, where the source term $f\left(x\right)$ is extracted from the exact solution $u(x)=\left(1-x\right)^3-{\rm{e}}^{3x}\left(1-x\right)$.

Table \ref{tab:TI0.5} shows the iteration count to tolerance when considering the methods $V(0,1)$, $V(1,1)$, $PV(1,1)$ and some comparison solvers like the unpreconditioned CG, the Laplacian preconditioner $P_2$ and the circulant preconditioner $P_\text{C}$.

By removing the time dependency, the ill-conditioning of the coefficient matrix increases, since the identity matrix in equation \eqref{eq:Coe_Matrix_form} disappears. Indeed, in Table \ref{tab:TI0.5}, fixing $\lambda=3$, we note an increase of the overall iterations to tolerance with respect to Table \ref{tab:T11}. Nevertheless, for any tested weight $\omega$, the iterations of the tested V-cycles are stable with respect to the increasing size of the linear system, while the iterations of preconditioners $P_C$ and $P_2$ tend to increase. We note that, when $\alpha\approx 2$, the preconditioner $P_2$ yields a low amount of iterations and shows linear convergence with respect to the size, which is in accordance with the results in the non-tempered case \cite{donatelli2016spectral}.

Choosing the weight $\omega^\star$ we obtain the minimal iteration count of $V(1,1)$. Removing the pre-smoothing iteration we observe an increase of $\sim5$ iterations, but the solver is still robust enough to yield a linear convergence with respect to the matrix size. More robustness can be reached by using CG as main solver and one iteration of $V(1,1)$. In this case, the iteration in Table \ref{tab:TI0.5} are shown to reduce almost by a half. Note that the iteration matrix of $V(0,1)$ is not symmetric and hence it cannot by applied as preconditioner for CG.

\begin{table}
	\caption{Example 2 with $\lambda=3$ - number of iterations to tolerance of V-cycle and preconditioned CG with $\omega^\star=0.85,0.75,0.69$ for $\alpha=1.4,1.7,1.9$, respectively, and $\gamma_3=0.00235$ .}
	\begin{small}
		\setlength{\tabcolsep}{2pt}
		\begin{center}
			\begin{tabular}{c c c c c c c c c c c c c c}
				\hline
				\hline
				\noalign{\vskip 1mm}
				\multirow{1}{*} {}&
				\multicolumn{2}{c}{${}$} &
				\multicolumn{6}{c}{${V(1,1)}$} &
				\multicolumn{1}{c}{${V(0,1)}$} &
				\multicolumn{1}{c}{${PV(1,1)}$} &
				\multirow{2}{*}{$P_\text{C}$} &
				\multirow{2}{*}{$P_2$}&
				\multicolumn{1}{c}{}\\
				\cmidrule(r){4-9}\cmidrule(r){10-10}\cmidrule(r){11-11}
				${\alpha}$& ${\mbox{M=N}}$&${\mbox{CG}}$ & $\;{\omega=\mbox{0.5}}\;$ & {$\omega=\mbox{0.6}$} & {$\;\omega=\mbox{0.7}\;$}& {$\;\omega=\mbox{0.8}\;$}& {$\;\omega=\mbox{0.9}\;$}&$\;\omega^\star\;$&$\omega^\star$&$\omega^\star$ \\ \hline
				$           $&$2^7$   &${75 }$&$13$&$11$&$9 $&$9$&${11}$&${9}$&${15}$&${6}$&${12}$&${20}$\\
				$           $&$2^8$   &${121}$&$14$&$11$&$10$&$9$&${10}$&${9}$&${14}$&${6}$&${14}$&${24}$\\
				$        1.4$&$2^9$  &${198}$&$14$&$12$&$10$&$9$&${10}$&${9}$&${13}$&${6}$&${16}$&${29}$\\
				$       $&$2^{10}$
				&${322}$&$14$&$12$&$10$&$9$&${10}$&${9}$&${13}$&${6}$&${17}$&${35}$\\
				\hline
				$           $&$2^7$   &${102}$&$14$&$11$&$10$&$11$&${17}$&$10$&$15$&${6}$&${15}$&${13}$\\
				$           $&$2^8$   &${185}$&$14$&$12$&$10$&$11$&${16}$&$10$&$15$&${6}$&${17}$&${15}$\\
				$        1.7$&$2^9$   &${334}$&$15$&$12$&$10$&$11$&${16}$&$10$&$14$&${6}$&${21}$&${17}$\\
				$           $&$2^{10}$&${603}$&$15$&$12$&$10$&$11$&${15}$&$10$&$14$&${6}$&${24}$&${19}$\\
				\hline
				$           $&$2^7$   &${123}$&$14$&$11$&$10$&${15}$&${27}$&$10$&$15$&${7}$&${17}$&${8}$\\
				$           $&$2^8$   &${238}$&$14$&$12$&$11$&${15}$&${27}$&$11$&$15$&${7}$&${21}$&${9}$\\
				$        1.9$&$2^9$   &${461}$&$15$&$12$&$11$&${14}$&${27}$&$11$&$15$&${7}$&${26}$&${10}$\\
				$           $&$2^{10}$&${891}$&$15$&$12$&$11$&${14}$&${26}$&$11$&$15$&${7}$&${32}$&${10}$\\
				\hline
			\end{tabular}
		\end{center}
	\end{small}
	\label{tab:TI0.5}
\end{table}

{\bf Example 3.} Let us consider the TFDE
\begin{equation*}\label{eq_Ex5}
	-\frac{3}{10}{_{a}{\mathbb{{D}}_{x}^{\alpha,\lambda}}u(\,x)\,}-\frac{7}{10}\,{_{x}{\mathbb{{D}}_{b}^{\alpha,\lambda}}u(\,x)\,}=f(\,x)\,,
\end{equation*}
where the source term $f\left(x\right)$ is extracted from the exact solution $u(\,x)\,=\left(1-x\right)^3-{\rm{e}}^{3 x}\left(1-x\right)$.

In this case the resulting linear system is non-symmetric, due to the different weighting of the two tempered fractional operators. Nevertheless, Table \ref{tab:TI0.7} shows multigrid to be robust enough to deal with the asymmetry, and the iterations also show a slight reduction with respect to the results in Table \ref{tab:TI0.5}.


\begin{table}
	\caption{Example 3 with $\lambda=3$ - number of iterations to tolerance of V-cycle and preconditioned GMRES with $\omega^\star=0.85,0.75,0.69$ for $\alpha=1.4,1.7,1.9$, respectively, and $\gamma_3=0.00235$.}
	\begin{small}
		\setlength{\tabcolsep}{2.5pt}
		\begin{center}
			\begin{tabular}{c c c c c c c c c c c c c c c}
				\hline
				\hline
				\noalign{\vskip 1mm}
				${\alpha}$& ${\mbox{M=N}}$&${\mbox{GMRES}}$&$V(1,1)$&${V(0,1)}$& ${PV(1,1)}$& ${PV(0,1)}$ & ${P_C}$&$P_{2}$\\ \hline
				
				$           $&$2^7$   &${101}$&${9}$&${15}$&${5}$&${7}$&${11}$&${18}$\\
				$           $&$2^8$   &${180}$&${9}$&${14}$&${5}$&${6}$&${12}$&${20}$\\
				$        1.4$&$2^9$   &${320}$&${9}$&${13}$&${4}$&${6}$&${14}$&${21}$\\
				$           $&$2^{10}$&${567}$&${9}$&${13}$&${4}$&${6}$&${16}$&${22}$\\
				\hline
				$           $&$2^7$   &${115}$&${9}$&${15}$&${4}$&${7}$&${13}$&${11}$\\
				$           $&$2^8$   &${217}$&${9}$&${15}$&${4}$&${6}$&${15}$&${11}$\\
				$        1.7$&$2^9$   &${406}$&${9}$&${14}$&${4}$&${6}$&${18}$&${12}$\\
				$           $&$2^{10}$&${761}$&${9}$&${14}$&${4}$&${6}$&${22}$&${12}$\\
				\hline
				$           $&$2^7$   &${125}$&${10}$&${15}$&${5}$&${7}$&${15}$&${6}$\\
				$           $&$2^8$   &${246}$&${10}$&${15}$&${4}$&${7}$&${18}$&${6}$\\
				$        1.9$&$2^9$   &${482}$&${10}$&${15}$&${4}$&${7}$&${21}$&${6}$\\
				$           $&$2^{10}$&${944}$&${11}$&${15}$&${4}$&${7}$&${27}$&${6}$\\
				\hline
			\end{tabular}
		\end{center}
	\end{small}
	\label{tab:TI0.7}
\end{table}

\section{2D Problems}\label{sect:2D}
Here we consider the two dimensional TFDE given by

\begin{equation}\label{eq:Ex6}
	\begin{cases}
		\frac{\partial u(x,y,t)}{\partial t}={C}_{l}(x,y){_{a_{1}}{\mathbb{{D}}_{x}^{\alpha,\lambda_{1}}}u(x,y,t)}+{C}_{r}(x,y){_{x}{\mathbb{{D}}_{b_{1}}^{\alpha,\lambda_{1}}}u(x,y,t)}+{E}_{l}(x,y){_{a_{2}}{\mathbb{{D}}_{y}^{\beta,\lambda_{2}}}u(x,y,t)}\\
		\quad\quad\quad\quad+\mbox{E}_{r}(x,y){_{y}{\mathbb{{D}}_{b_{2}}^{\beta,\lambda_{2}}}u(x,y,t)}+f(x,y,t),\quad (x,y,t)\in\Omega\times(0,T],\\
		u(x,y,t)=0,\quad~~\quad (x,y,t)\in\partial\Omega\times[0,T],\\
		u(x,y,0)=u_{0},\quad\quad (x,y)\in\Omega\times[0,T],
	\end{cases}
\end{equation}
where ${C}_{s},{E}_{s},\ s\in\lbrace l,r\rbrace$ are the non-negative diffusion coefficients and $\Omega=[a_1,b_1]\times[a_2,b_2]$ and $T$ are the spatial domain and the final time-step, respectively.

In order to introduce the CN-WSGD scheme, let us fix $M_1,M_2,N\in\mathbb{N}$ and discretize the domain $\Omega\times[0,T]$ with
\begin{equation*}\label{3D_GRID}
	\begin{array}{lllll}
		x_j=a_1+jh_x,		&& h_x=\frac{b_1-a_1}{M_1+1}, 		&& j=0,...,M_1+1,  	\\
		y_j=a_2+jh_y,	    && h_y=\frac{b_2-a_2}{M_2+1},  		&& j=0,...,M_2+1,  	\\
		t^j=j\tau, 		    && \tau=\frac{T}{N}, 				&& j=0,...,N,	 	\\
		t^{j+\frac{1}{2}}=\frac{t^{j}+t^{j+1}}{2}, && 		    && j=1,...,N.
	\end{array}
\end{equation*}
For $s\in\lbrace l,r\rbrace$, let us now introduce the following $M$-dimensional vectors, with $M=M_{1}M_{2}$
\begin{align*}
	{\bf{u}}^{j}=&[\,u_{1,1}^{j},u_{2,1}^{j},\cdots,u_{M_{1},1}^{j},u_{1,2}^{j},u_{2,2}^{j},\cdots,u_{M_{1},2}^{j},\cdots,u_{1,M_{2}}^{j},u_{2,M_{2}}^{j},\cdots,u_{M_{1},M_{2}}^{j}],\\
	{\bf{C}}_s=&[\,{C}^{s}_{1,1},{C}^{s}_{2,1},\cdots,{C}^{s}_{M_{1},1},{C}^{s}_{1,2},{C}^{s}_{2,2},\cdots,{C}^{s}_{M_{1},2},\cdots,{C}^{s}_{1,M_{2}},{C}^{s}_{2,M_{2}},\cdots,{C}^{s}_{M_{1},M_{2}}],\\
	{\bf{E}}_{s}=&[\,{E}^{s}_{1,1},{E}^{s}_{2,1},\cdots,{E}^{s}_{M_{1},1},{E}^{s}_{1,2},{E}^{s}_{2,2},\cdots,{E}^{s}_{M_{1},2},\cdots,{E}^{s}_{1,M_{2}},{E}^{s}_{2,M_{2}},\cdots,{E}^{s}_{M_{1},M_{2}}],\\
	{\bf{F}}^{j+1}=&[{f}^{j+\frac{1}{2}}_{1,1},{f}^{j+\frac{1}{2}}_{2,1},\cdots,{f}^{j+\frac{1}{2}}_{M_{1},1},{f}^{j+\frac{1}{2}}_{1,2},{f}^{j+\frac{1}{2}}_{2,2},\cdots,{f}^{j+\frac{1}{2}}_{M_{1},2},\cdots,{f}^{j+\frac{1}{2}}_{1,M_{2}},{f}^{j+\frac{1}{2}}_{2,M_{2}},\cdots,{f}^{j+\frac{1}{2}}_{M_{1},M_{2}}],
\end{align*}
where $u_{i_1,i_2}^j=u(x_{i_1},y_{i_2})$, $C_{i_1,i_2}^s=C(x_{i_1},y_{i_2})$, $E_{i_1,i_2}^s=E(x_{i_1},y_{i_2})$, and $f_{i_1,i_2}^{j+\frac{1}{2}}=j(x_{i_1},y_{i_2},t^{j+\frac{1}{2}})$.

The 2D space CN-TWSGD scheme is obtained through the same procedure of discretization as in Section \ref{subsec:CN-TWSGD}, i.e., combining the CN discretization in time with the TWSGD formula for both tempered fractional derivatives with respect to $x$ and $y$. This leads to the time stepping scheme
\begin{equation*}\label{eq:Matrix_form_2D}
	\left(I_{M}-\mathcal{M}_{x,M}-\mathcal{M}_{y,M}\right)\mathbf{u}^{j+1}=\left(I_M+\mathcal{M}_{x,M}+\mathcal{M}_{y,M}\right)\mathbf{u}^{j}+\tau{\mathbf{F}^{j+1}},
\end{equation*}
where
\begin{align*}
	\mathcal{M}_{x,M}&=r_{1}\left[\mbox{C}_{l}\left(I_{M_{2}}\otimes B^{(2,\alpha)}_{M_{1},\lambda_{1}}\right)+\mbox{C}_{r}\left(I_{M_{2}}\otimes (B^{(2,\alpha)}_{M_{1},\lambda_{1}})^{T}\right)\right]-s_{1}\left(\mbox{C}_{l}-\mbox{C}_{r}\right)\left(I_{M_{2}}\otimes H_{2,M_{1}}\right),\\
	\mathcal{M}_{y,M}&=r_{2}\left[\mbox{E}_{l}\left(B^{(2,\beta)}_{M_{2},\lambda_{2}}\otimes I_{M_{1}}\right)+\mbox{E}_{r}\left((B^{(2,\beta)}_{M_{2},\lambda_{2}})^{T}\otimes I_{M_{1}} \right)\right]-s_{2}\left(\mbox{E}_{l}-\mbox{E}_{r}\right)\left(H_{2,M_{2}}\otimes I_{M_{1}}\right),
\end{align*}
with 
\begin{equation*}
	{C}_{s}=\mbox{diag}({\bf{C}}_{s}),\quad {E}_{s}=\mbox{diag}({\bf{E}}_{s}),\ s\in\lbrace l,r\rbrace
\end{equation*}
and the scaling factors $r_{1}=\frac{\tau}{2h^{\alpha}_{x}}$, $r_{2}=\frac{\tau}{2h^{\beta}_{y}}$,  $s_{1}=\frac{\alpha\tau\lambda_{1}^{\alpha-1}}{4h_{x}}$ and $s_{2}=\frac{\beta\tau\lambda_{2}^{\beta-1}}{4h_{y}}$. 

Summing up, the time-stepping scheme is
\begin{equation*}\label{eq:Final_Lin_sys_2D}
	{\mathcal{A}_{(\alpha,\beta),M}}\mathbf{u}^{j+1}={b^{j+1}}.
\end{equation*}
with
\begin{align*}
	{\mathcal{A}_{(\alpha,\beta),M}}&=\left(I_{M}-\mathcal{M}_{x,M}-\mathcal{M}_{y,M}\right),\nonumber\\
	{b^{j+1}}&=\left(I_M+\mathcal{M}_{x,M}+\mathcal{M}_{y,M}\right)\mathbf{u}^{j}+\tau{\mathbf{F}^{j+1}}.
\end{align*}

We note that, similarly to the non-tempered case in \cite{moghaderi2017spectral} with constant and equal diffusion coefficients, from Proposition \ref{Proposition2} and the properties of the Kronecker product, 
${\mathcal{A}_{(\alpha,\beta),M}}$ is a symmetric positive definite Block-Toeplitz with Toeplitz Blocks (\mbox{BTTB}) matrix, whose symbol follows directly from the 1D case. Let $h_x^\alpha=h_y^\beta,\tau\rightarrow 0$, then $r_1=r_2$ and
\begin{equation*}
	\left\{\frac{1}{r_1}\mathcal{A}_{(\alpha,\beta),M}\right\}_{M\in\mathbb{N}}\sim \mathcal{F}(x,y)=-c \, f\left(\alpha;x\right) -e \, f\left(\beta;y\right). 
\end{equation*}
From a numerical point of view, MGMs work even if the constraint $h_x^\alpha=h_y^\beta$ is not satisfied, as long as there is not too much anisotropy, i.e., $\alpha$ is not that far from $\beta$.

Since the optimum Jacobi parameter for the 2D Laplacian is $\omega=\frac{4}{5}$ (see \cite{Trot}), according to the 1D case, the relaxation parameter of Jacobi $\omega^\star$ is computed as 
\begin{equation}\label{eq:w2D}
	\omega^\star=\frac{4}{5}\xi,    
\end{equation}
where $\xi=\frac{2\hat{\mathcal{F}}_0}{\|\mathcal{F}(x,y)\|_\infty}$, with $\hat{\mathcal{F}}_0$ being the first Fourier coefficient of $\mathcal{F}(x,y)$.

In the following examples, we compare the performance of the Laplacian, MGMs and circulant preconditioners. Precisely,
\begin{itemize}
	\item Like in the 1D case, here we consider the 2D Laplacian preconditioner ${P_{2}}=I_{M}-({P}_{x,M}+{P}_{y,M})$, where
	\begin{align*}
		&{P}_{x,M}=r_{1}\big(\mbox{C}_{l}(I_{M_{2}}\otimes {L}_{M_{1}})+\mbox{C}_{r}(I_{M_{2}}\otimes ({L}_{M_{1}})^{T})\big),\nonumber\\
		&{P}_{y,M}=r_{2}\big(\mbox{E}_{l}({L}_{M_{2}}\otimes I_{M_{1}})+\mbox{E}_{r}(({L}_{M_{y}})^{T}\otimes I_{M_{1}} )\big),\nonumber
	\end{align*}
	with ${L}_{M}=B^{(2,2)}_{M,0}$ (when $\gamma_{3}=0$) being the Laplacian matrix. Note that the structure of the coefficient matrix is not preserved as we do not take into account the advection terms in $\mathcal{M}_{x,M}$ and $\mathcal{M}_{y,M}$. \\
	$P_{2}^\nu$ denotes the inversion of ${P_{2}}$ through $\nu$ iterations of V$(0,1)$ with Galerkin approach, weighted Jacobi as smoother with $\omega=\frac{4}{5}$ and bilinear interpolation as grid transfer operator. In the following Tables \ref{tab:T21}-\ref{tab:T22}, we also report the results provided by the exact inversion of $P_2$, denoted by $\widetilde{P}_{2}$.
	\item We use V$(1,1)$ as GMRES preconditioner with both Galerkin and geometric approaches, respectively denoted by $\widetilde{P}$V$(1,1)$ and $P$V$(1,1)$. In both cases, the grid transfer operator is bilinear interpolation, the weight of Jacobi is $\omega^\star$ computed as in \eqref{eq:w2D}, and one iteration of V-cycle is performed to approximate the inverse of ${\mathcal{A}_{(\alpha,\beta),M}}$.
	\item The standard circulant preconditioner is defined as  
	\begin{align*}
		{P_{\mbox{C}}}=I_{M}-\Big[r_{1}{c^{+}}\Big(I_{M_{2}}\otimes (\mathcal{C}(B^{(2,\alpha)}_{M_{1},\lambda_{1}})+\mathcal{C}(B^{(2,\alpha)}_{M_{1},\lambda_{1}})^{T})\Big)+r_{2}{e^{+}}\Big( (\mathcal{C}(B^{(2,\beta)}_{M_{2},\lambda_{2}})+\mathcal{C}(B^{(2,\beta)}_{M_{2},\lambda_{2}})^{T})\otimes I_{M_{1}}\Big)\Big],
	\end{align*}
	where ${c^{+}}=\mbox{mean}({\bf{C}_{s}})$, ${e^{+}}=\mbox{mean}({\bf{E}_{s}})$ and $\mathcal{C}(B)$ is the Chan circulant approximation of the Toeplitz matrix $B\in\mathbb{R}^{N\times N}$. An important advantage of circulant preconditioning is that $\left(\mathcal{C}(B)\right)^{-1}$ can be computed exactly in $O(N\log N)$ operations through the FFT algorithm. Nevertheless, it is well known that multilevel circulant matrices cannot ensure superlinear convergence if used as preconditioner for multilevel Toeplitz matrices \cite{capizzano2000any,donatelli2018spectral}. 
\end{itemize}

{\bf Example 4.} Let $\Omega=[0,2]\times [0,2],\ T=2$ and consider
\begin{align*}
	&\mbox{C}_{l}(x,y)=\Gamma(3-\alpha)(1+x)^\alpha(1+y)^2,\quad\mbox{C}_{r}(x,y)=\Gamma(3-\alpha)(3-x)^\alpha(3-y)^2,\nonumber\\
	&\mbox{E}_{l}(x,y)=\Gamma(3-\beta)(1+x)^2(1+y)^\beta,\quad\mbox{E}_{r}(x,y)=\Gamma(3-\beta)(3-x)^2(3-y)^\beta,
\end{align*}
while the source term $f(x,y,t)$ is retrieved from the exact solution
\begin{align*}
	u(x,y,t)=16e^{-t}x^2y^2\left(2-x\right)^2\left(2-y\right)^2.
\end{align*}

Fixed $\alpha=1.8,\ \beta=1.6$, Table \ref{tab:T21} shows the iterations to tolerance and CPU times for different GMRES preconditioners varying $\lambda_{1},\lambda_{2}$, and $N=M_1=M_2$.\\
The number of iterations provided by the Laplacian preconditioner $\widetilde{P}_{2}$ is always smaller than that of $P_2^1$, nevertheless it is computationally expensive when $N$ is large. A good compromise here is in the form of $P_2^2$, with two iterations of V-cycle, whose \mbox{CPU}-times are slightly lower with respect to $P_2^1$.\\
Regarding the V-cycle preconditioner applied to the coefficient matrix, the Galerkin approach is more efficient than the geometric one when $N<2^7$. For $N\geq 2^7$, the Galerkin approach becomes computationally expensive in comparison with the geometric multigrid.\\
In terms of CPU times, both $P_{2}^1$ and $P_{2}^{2}$ beat other solvers for large sized liner systems $(N=2^7)$. In any case, all preconditioners but the circulant one, show linear convergence against the matrix size.

\begin{table}
	\caption{Example 4 - average number of iterations to tolerance of preconditioned GMRES for $\alpha=1.8$,$~\beta=1.6$ with $\gamma_{3}=0.0235$, $M_1=M_2=N$, and $\omega^\star=0.8507$.}
	\begin{small}
		\setlength{\tabcolsep}{7pt}
		\begin{center}
			\begin{tabular}{c c c c c c c c c c c c c c c c c c c}
				\hline
				\hline
				\noalign{\vskip 1mm}  
				\multirow{1}{*} {}&
				\multicolumn{2}{c}{${}$} &
				\multicolumn{2}{c}{$\widetilde{{P}}_{\mbox{2}}$} &
				\multicolumn{2}{c}{${{P}}_{\mbox{2}}^1$} &
				\multicolumn{2}{c}{${P}^{2}_{\mbox{2}}$} &
				\multicolumn{2}{c}{$\widetilde{{P}}V(1,1)$} &
				\multicolumn{2}{c}{$PV(1,1)$} &
				\multicolumn{2}{c}{$P_C$} \\
				\cmidrule(r){4-5}\cmidrule(r){6-7}\cmidrule(r){8-9}\cmidrule(r){10-11}\cmidrule(r){12-13}\cmidrule(r){14-15}
				$\lambda_{1},\lambda_{2}$&$N$& $\mbox{GMRES}$ & ${\mbox{It}}$&${\mbox{T(s)}}$& ${\mbox{It}}$ &${\mbox{T(s)}}$&${\mbox{It}}$ &${\mbox{T(s)}}$& ${\mbox{It}}$& ${\mbox{T(s)}}$& ${\mbox{It}}$&${\mbox{T(s)}}$& ${\mbox{It}}$&${\mbox{T(s)}}$ \\ \hline
				${}$&$2^4$   &${38 }$&${9 }$&${0.262}$&${14}$&${0.220}$&${11}$&${0.231}$&${8}$&${0.206}$&${8}$&${0.440}$&${16}$&${0.430}$\\
				${\lambda_{1}=0}$&$2^5$  &${74 }$&${9 }$&${1.758}$&${16}$&${0.461}$&${13}$&${0.482}$&${9}$&${0.620}$&${9}$&${1.249}$&${22}$&${1.452}$\\
				${\lambda_{2}=0}$&$2^6$  &${139}$&${9 }$&${14.63}$&${17}$&${2.388}$&${14}$&${2.220}$&${9}$&${5.144}$&${9}$&${5.251}$&${27}$&${7.791}$\\
				${}$&$2^7$   &${256}$&${10}$&${139.7}$&${18}$&${21.36}$&${15}$&${31.76}$&${8}$&${64.92}$&${9}$&${25.59}$&${35}$&${65.54}$\\
				\hline
				${}$&$2^4$   &${41 }$&${7}$&${0.248}$&${14}$&${0.253}$&${10}$&${0.271}$&${8}$&${0.221}$&${9}$&${0.453}$&${17}$&${0.441}$\\
				${\lambda_{1}=1}$&$2^5$   &${76 }$&${7}$&${1.458}$&${15}$&${0.481}$&${12}$&${0.508}$&${9}$&${0.609}$&${9}$&${1.330}$&${21}$&${1.478}$\\
				${\lambda_{2}=1}$&$2^6$   &${139}$&${7}$&${11.90}$&${16}$&${2.407}$&${12}$&${2.186}$&${9}$&${5.364}$&${9}$&${5.323}$&${27}$&${7.336}$\\
				${}$&$2^7$   &${250}$&${8}$&${115.6}$&${16}$&${19.06}$&${13}$&${18.74}$&${8}$&${65.41}$&${9}$&${25.42}$&${34}$&${62.49}$\\
				\hline
				${}$&$2^4$   &${42 }$&${7}$&${0.261}$&${14}$&${0.226}$&${10}$&${0.229}$&${8}$&${0.231}$&${12}$&${0.532}$&${17}$&${0.450}$\\
				${\lambda_{1}=5}$&$2^5$   &${81 }$&${6}$&${1.314}$&${15}$&${0.488}$&${10}$&${0.466}$&${9}$&${0.647}$&${13}$&${1.777}$&${21}$&${1.468}$\\
				${\lambda_{2}=5}$&$2^6$   &${146}$&${6}$&${10.86}$&${15}$&${2.306}$&${11}$&${2.238}$&${9}$&${5.893}$&${11}$&${6.271}$&${26}$&${7.341}$\\
				${-}$&$2^7$  &${243}$&${6}$&${94.36}$&${14}$&${17.49}$&${11}$&${16.76}$&${8}$&${66.74}$&${10}$&${18.55}$&${33}$&${61.44}$\\
				\hline
			\end{tabular}
		\end{center}
	\end{small}
	\label{tab:T21}
\end{table}
{\bf Example 5.} This example is taken from \cite{yu2017third}. We consider the TFDE in equation \eqref{eq:Ex6} with diffusion coefficients ${D}_{l}={E}_{l}=1$, ${D}_{r}={E}_{r}=0$, spatial domain $\Omega=[\,0,1]\times[0,1]$, and final time $T=1$.
The forcing term is built from the exact solution 
\begin{align*}
	u(x,y,t)={\rm{e}}^{-t-\lambda_{1}x-\lambda_{2}y}x^4y^4\left(1-x\right)\left(1-y\right),
\end{align*}

In this case, we have a strong anisotropy, since in both dimensions the right diffusion coefficient is set to $0$. This leads to a strongly non-symmetric coefficient matrix. In order to apply the multigrid, we recover some symmetry by taking $\alpha,\beta\approx 2$, since in that case the remaining left tempered fractional operator tends to a Laplacian matrix.

Fixed $\alpha=1.8,\beta=1.6$, Table \ref{tab:T22} reports the iterations to tolerance of our preconditioners varying $\lambda_1,\lambda_2$ and the size $N=M_1=M_2$. Regarding the Laplacian preconditioner, the same comments as in Example 4 apply. In the case of $\widetilde{P}V(1,1)$ and $PV(1,1)$, here we note a decrease in iterations yield by the geometric approach with respect to the previous example when $\lambda_1=\lambda_2=5$.\\
Even in this case, all preconditioners but the circulant one, show linear convergence against the matrix size, therefore MGMs are shown to be robust preconditioners even in this anisotropic case.

\begin{table}
	\caption{Example 5 - average number of iterations to tolerance of preconditioned GMRES for $\alpha=1.8,\beta=1.6$ with $\gamma_{3}=0.0235$, $M_1=M_2=N$, and $\omega^\star=0.8507$.}
	\begin{small}
		\setlength{\tabcolsep}{7pt}
		\begin{center}
			\begin{tabular}{c c c c c c c c c c c c c c c c c c c c}
				\hline
				\hline
				\noalign{\vskip 1mm}
				\multirow{1}{*} {}&
				\multicolumn{2}{c}{${}$} &
				\multicolumn{2}{c}{$\widetilde{{P}}_{\mbox{2}}$} &
				\multicolumn{2}{c}{${{P}}_{\mbox{2}}$} &
				\multicolumn{2}{c}{${P}^{2}_{\mbox{2}}$} &
				\multicolumn{2}{c}{$\widetilde{{P}}V(1,1)$} &
				\multicolumn{2}{c}{${{P}}V(1,1)$} &
				\multicolumn{2}{c}{$P_C$} &
				\multicolumn{1}{c}{${}$} \\
				\cmidrule(r){4-5}\cmidrule(r){6-7}\cmidrule(r){8-9}\cmidrule(r){10-11}\cmidrule(r){12-13}\cmidrule(r){14-15}
				$\lambda_{i's}$&$N$& ${\mbox{GMRES}}$& ${\mbox{It}}$&${\mbox{T(s)}}$& ${\mbox{It}}$ &${\mbox{T(s)}}$&${\mbox{It}}$ &${\mbox{T(s)}}$&${\mbox{It}}$&${\mbox{T(s)}}$&${\mbox{It}}$&${\mbox{T(s)}}$& ${\mbox{It}}$&${\mbox{T(s)}}$\\ \hline
				${}$&$2^4$   &${38}$&${10}$&${0.182}$&${14}$&${0.229}$&${12}$&${0.233}$&${8}$&${0.230}$&${8}$&${0.407}$&${13}$&${0.459}$\\
				${\lambda_{1}=0}$&$2^5$  &${60}$&${10}$&${0.690}$&${15}$&${0.473}$&${12}$&${0.478}$&${8}$&${0.502}$&${9}$&${1.052}$&${15}$&${1.151}$\\
				${\lambda_{2}=0}$&$2^6$  &${90}$&${9 }$&${5.084}$&${15}$&${2.220}$&${13}$&${2.208}$&${8}$&${3.371}$&${9}$&${3.616}$&${17}$&${4.618}$\\
				${}$&$2^7$   &${121}$&${8}$&${66.07}$&${15}$&${18.44}$&${13}$&${18.47}$&${8}$&${54.51}$&${9}$&${15.99}$&${19}$&${25.58}$\\
				\hline
				${}$&$2^4$   &${35 }$&${8}$&${0.170}$&${13}$&${0.225}$&${10}$&${0.237}$&${8}$&${0.211}$&${8}$&${0.416}$&${13}$&${0.461}$\\
				${\lambda_{1}=1}$&$2^5$   &${54 }$&${7}$&${0.558}$&${14}$&${0.410}$&${11}$&${0.433}$&${8}$&${0.488}$&${8}$&${0.980}$&${15}$&${1.132}$\\
				${\lambda_{2}=1}$&$2^6$   &${78 }$&${7}$&${4.214}$&${14}$&${1.973}$&${11}$&${1.929}$&${8}$&${3.194}$&${8}$&${3.292}$&${17}$&${4.595}$\\
				${}$&$2^7$   &${105}$&${7}$&${40.46}$&${14}$&${17.23}$&${11}$&${16.81}$&${7}$&${49.02}$&${8}$&${14.88}$&${18}$&${24.26}$\\
				\hline
				${}$&$2^4$  &${30}$&${7}$&${0.173}$&${13}$&${0.213}$&${9}$&${0.224}$&${7}$&${0.204}$&${8}$&${0.387}$&${12}$&${0.392}$\\
				${\lambda_{1}=5}$&$2^5$  &${43}$&${7}$&${0.561}$&${12}$&${0.394}$&${9}$&${0.419}$&${7}$&${0.463}$&${8}$&${0.960}$&${13}$&${0.984}$\\
				${\lambda_{2}=5}$&$2^6$  &${59}$&${7}$&${4.536}$&${12}$&${1.948}$&${9}$&${1.821}$&${7}$&${3.171}$&${8}$&${3.431}$&${14}$&${3.793}$\\
				${}$&$2^7$  &${79}$&${6}$&${63.02}$&${12}$&${15.73}$&${10}$&${14.63}$&${7}$&${50.43}$&${8}$&${16.42}$&${15}$&${23.28}$\\
				\hline
			\end{tabular}
		\end{center}
	\end{small}
	\label{tab:T22}
\end{table}

\section{Conclusions}\label{sect:concl}
In this paper, we have investigated multigrid methods for time-dependent tempered fractional diffusion equations. After providing a symbol-based detailed spectral analysis of the coefficient matrix, we have exploited such information to prove the stability of the time stepping CN-TWSGD scheme.
Moreover, we have extended the theoretical results in \cite{bu2021multigrid} regarding the convergence of multigrid methods, in case of constant and equal diffusion coefficients, to a wider interval of fractional derivatives, i.e., from $\alpha\in(1.26,1.71)$ to the entire interval $\alpha\in(1,2)$. We have also expanded the interval of the Jacobi relaxation parameter from $\omega\in(0,0.5]$ to $\omega\in(0,\xi]$, where $\xi$ depends on the symbol of the coefficient matrix and is usually greater than $0.5$. We have further exploited the symbol to provide a cheap to be computed suitable weight $\omega^\star$ for the multigrid method.

In the numerical results section, we reported examples that support our theoretical results. We have shown that the estimated suitable weight for Jacobi $\omega^\star$ often leads to the smallest iteration count to tolerance. Furthermore, numerical results with non-constant and non-equal diffusion coefficients show that our multigrid-based solver is robust, even when violating the constraints imposed in the theoretical settings. Finally, tests in 1D and 2D show that when the fractional derivatives are close to 2, the Laplacian preconditioner allows fast convergence like in the case of non-tempered fractional derivatives \cite{donatelli2016spectral}.

	\section*{Acknowledgments}
	This research was supported by the GNCS-INDAM (Italy) and the EuroHPC TIME-X project n. 955701.

\end{document}